\newtheorem{theorem}{Theorem}[section]
\newtheorem{lemma}[theorem]{Lemma}
\newtheorem{corollary}[theorem]{Corollary}
\theoremstyle{definition}
\newtheorem{example}[theorem]{Example}
\theoremstyle{remark}
\newtheorem{remark}[theorem]{Remark}
\numberwithin{equation}{section}
\DeclareMathOperator{\diam}{diam}
\DeclareMathOperator{\dimh}{dim_H}
\DeclareMathOperator{\dimhk}{dim_H^\kappa}
\DeclareMathOperator{\dimp}{dim_P}
\DeclareMathOperator{\dimpk}{dim_P^\kappa}
\DeclareMathOperator{\dimbu}{\overline{dim}_B}
\DeclareMathOperator{\Id}{Id}
\def\h{{\bf h}}
\def\x{{\bf x}}
\def\w{{\bf w}}
\def\z{{\bf z}}
\def\L{{\mathcal L}}
\def\N{{\mathbb N}}
\def\P{{\mathbb P}}
\def\E{{\mathbb E}}
\def\R{{\mathbb R}}
\def\T{{\mathbb T}}
\begin{document}

\title[Hitting probabilities of random covering sets in tori]
{Hitting probabilities of random covering sets in tori and metric spaces}

\author[E. J\"arvenp\"a\"a et al.]{Esa J\"arvenp\"a\"a$^1$}
\address{Mathematics, P.O. Box 3000,
         90014 University of Oulu, Finland$^{1,2,5}$}
\email{esa.jarvenpaa@oulu.fi$^1$}

\author[]{Maarit J\"arvenp\"a\"a$^2$}
\email{maarit.jarvenpaa@oulu.fi$^2$}

\author[]{Henna Koivusalo$^3$}
\address{Department on Mathematics, University of York, York YO10 5DD,
Great Britain$^3$}
\email{henna.koivusalo@york.ac.uk$^3$}

\author[]{Bing Li$^4$}
\address{Department of Mathematics, South China University of Technology,
Guangzhou, 510641, P. R. China$^4$}
\email{scbingli@scut.edu.cn$^4$}

\author[]{Ville Suomala$^5$}
\email{ville.suomala@oulu.fi$^5$}

\author[]{Yimin Xiao$^6$}
\address{Department of Statistics and Probability, A-413 Wells Hall,
Michigan State University, East Lansing MI48824, USA$^6$}
\email{xiao@stt.msu.edu$^6$}

\thanks{We acknowledge the support of the Academy of Finland, the Centre of
Excellence in Analysis and Dynamics Research. HK thanks the support of
EPSRC Grant EP/L001462 and Osk. Huttunen foundation. BL was supported by
NSFC 11201155, Guangdong Natural Science Foundation 2014A030313230 and
"Fundamental Research Funds for the Central Universities" SCUT (2015ZZ055).
YX was supported in part by NSF grants DMS-1307470 and DMS-1309856.}

\subjclass[2010]{60D05, 28A80}
\keywords{Random covering set, hitting probability, dimension of intersection}
\date{\today}

\begin{abstract}
We provide sharp lower and upper bounds for the Hausdorff dimension of the
intersection of a typical random covering set with a fixed analytic set both in
Ahlfors regular metric spaces and in the $d$-dimensional torus. In metric
spaces, we consider covering sets generated by balls and, in
tori, we deal
with general analytic generating sets.
\end{abstract}

\maketitle

\section{Introduction}

\subsection{Background}
Given a set $X$ and a sequence of subsets $Z_n\subset X$, $n\in\N$, a general
covering problem asks when $X$ is covered or when $X$ is covered infinitely
often by the sets $Z_n$. If $X$ is not covered, it is natural to 
study the size
and structure of the uncovered set $X\setminus\bigcup_{n\in\N} Z_n$ as well as
the limsup set $\limsup_{n\to\infty} Z_n=\bigcap_{k\in\N}\bigcup_{n=k}^\infty Z_n$.
This kind of problems arise in many different fields of mathematics. For
example, the generating sets $Z_n$ can come from a predescribed, arbitrary
sequence as in \cite{BeresnevichVelani06,WWX15}.  Given more 
structure to the
sequence,  finer information on the limsup set can be obtained, 
as demonstrated for instance in the situations where the sets are defined 
dynamically
\cite{FLL10,FanSchmelingTroubetzkoy,LiaoSeuret}, through shrinking targets
\cite{HV95} or in relation to continued fractions \cite{DF01,LWWX14}.

A classical problem of this sort, and perhaps a great motivator in the study of
limsup sets, is the problem of Diophantine approximation. In Diophantine
approximation, the object under study is the set of points that can be
approximated with a given approximation speed
$\psi\colon\mathbb N\to\mathbb R_+$ by rationals, that is, the set
\begin{equation}\label{eq:well}
\mathcal W(\psi)=\{x\in \mathbb R\mid |x-\tfrac pq|\le \psi(q)
  \text{ for infinitely many coprime pairs }
(p,q)\in\mathbb{Z}\times\mathbb N\}.\
\end{equation}
This is a limsup set of shrinking balls centred at rational points, see
\cite{H98,S80}. The question of which irrational points are `close' to rational
points is equally interesting in higher ambient dimensions, and in that case
there is a certain amount of freedom in choosing the shapes of the generating
sets, leading to  various kinds of interesting 
problems. For example, approximation by
cubes corresponds to simultaneous approximation at the same speed in all
directions, aligned rectangles correspond to different approximation speeds
and multiplicative approximation leads to approximating sets given by
hyperbolic regions \cite{BPV11,DV97,R98}. Lately, there has been an
increasing amount of interest in the Diophantine approximation properties of
points in fixed subsets of $\mathbb R^d$, for example, manifolds \cite{B12} or
fractal sets \cite{EFS11,W01}.

A further approach, and the one we will concentrate on, is to let the sets
$Z_n$ be random. In this context, the limsup set
$E=\bigcap_{k\in\N}\bigcup_{n=k}^\infty Z_n$ is usually referred to as a
{\it random covering set}.  The study of random covering sets
has a long and convoluted history; we refer the interested reader to
\cite{Dvoretzky56, Fan95, Fan02, Kahane85, Kahane92,
Kahane2000, Janson86, Shepp72, Shepp72-2} for the wide variety of 
interesting problems.

 In the present paper,  we study the {\it hitting
probabilities} of a random covering set $E$, namely, 
the probability that $E$ intersects a given subset $F\subset X$.
For some classical random sets, such as Brownian paths and 
fractal percolation
limit sets, the study of the hitting probabilities and the size of the
intersections goes back to Dvoretzky, Erd\"os, Kakutani and Taylor
\cite{DEK,DEKT}, Hawkes \cite{Hawkes1971b,Hawkes1981} and Lyons \cite{Lyons},
see also \cite{Kahane85,Peres96}. When the randomness appears as a random
transformation of a given subset of the Euclidean space, such results originate
from the pioneering works of Kahane \cite{Kahane86} and Mattila
\cite{Mattila84}, see also \cite{M95}. A recent line of research concerns
replacing the fixed set $F$ by a suitable parameterised family $\Gamma$ of sets
and showing that $\P(E\cap F\neq\emptyset\text{ for all }F\in\Gamma)>0$. See
\cite{ShmerkinSuomala2014} and references therein.

The structures of the random covering sets and more general limsup random
fractals are different from those considered in the above references. For
instance, for random covering sets, the packing and Hausdorff dimensions are
typically different. The hitting probabilities of a family of discrete random
limsup sets were studied by Khoshnevisan, Peres and Xiao in \cite{KPX}, with
applications to fast points of Brownian motion and other stochastic processes.
For further results concerning dimensional properties of limsup random
fractals, see \cite{DeMa,DPRZ,OrTa,Zh12}.

Applying the method of \cite{KPX}, Li, Shieh and Xiao \cite{LiShiehXiao}
investigated the intersection properties of random covering sets in the circle
$\T^1$. It is proved in \cite{LiShiehXiao} that if $(r_n)_{n\in\N}$ is a given
sequence of positive numbers and $\x=(x_n)_{n\in\N}$ is a sequence of independent
uniformly distributed random variables on the circle $\T^1$, then, denoting the
closed ball with radius $r$ and centre $x$ by $B(x,r)$, the random covering set
\[
E(\x)=\limsup_{n\to\infty}B(x_n,r_n)
\]
avoids a given analytic set $F\subset\T^1$ for almost all sequences
$\x=(x_n)_{n\in\N}$ provided  that $\dimp F<1-\dimh E(\x)$, while
$E(\x)\cap F\neq\emptyset$ for almost all $\x$ if $\dimp F>1-\dimh E(\x)$ and
if $(r_n)_{n\in\N}$ satisfies a mild technical assumption. Here,
$\dimh E(\x)=\inf\{s\le 1\mid\sum_{n=1}^\infty(r_n)^s<\infty\}$
for almost all $\x$. In the latter case,
\cite{LiShiehXiao} contains estimates for the Hausdorff dimension of
$E(\x)\cap F$. 

Several authors have considered random covering problems in
the context of metric spaces (see \cite{Kahane2000} and the references therein),
but their emphasis has been on the case of full covering and on the size of the
uncovered set $X\setminus\bigcup_{n\in\N}Z_n$. To the best of our knowledge, the
intersection properties of the random covering sets have been studied only
for randomly placed balls on  tori \cite{Du,Du2,LiShiehXiao,LS}. 
This has motivated us to investigate hitting probabilities of 
random covering sets on more general metric spaces.

\subsection{Overview of results and methods}

This work contains two types of results on hitting probabilities of random
covering sets. On one hand, we will consider the case when $X$ is a compact
Ahlfors regular metric space and $Z_n=B(x_n,r_n)$, where $(r_n)_{n\in\N}$ 
is a deterministic sequence of positive real numbers and $(x_n)_{n\in\N}$ 
is a sequence of independent random variables distributed according to an 
Ahlfors regular probability measure $\mu$ on $X$. On the other hand, we 
will consider the random covering sets
\begin{equation}\label{torusset}
\limsup_{n\to\infty} (x_n+A_n)
\end{equation}
in the $d$-dimensional torus $\T^d$, where $(A_n)_{n\in\N}$ is a fixed sequence
of analytic sets and $(x_n)_{n\in\N}$ are independent and uniformly distributed
random variables in $\T^d$. Finally, in $\T^d$ we will also consider a model
where the deterministic sets $A_n$ are, in addition to translating, also
randomly rotated.

Denoting the underlying space ($\T^d$ or a more general metric space) by $X$
and, for all $\x\in X^\N=:\Omega$, the random covering set by $E(\x)$,
we will be interested in the probability that the random covering set
intersects a fixed analytic set $F\subset X$. That is, we want to study the
hitting probability
\[
\P(\{\x\in\Omega\mid E(\x)\cap F\neq\emptyset\}),
\]
and, if this is positive, we shall determine almost sure
upper and lower bounds for $\dimh(E(\x)\cap F)$.

In Section~\ref{metric}, we will discuss the results of Li, Shieh and Xiao
\cite{LiShiehXiao} in detail, and extend them from the Euclidean
setting to Ahlfors regular metric spaces (see Theorems \ref{hittingcovering} and
\ref{intersectionestimate}). These theorems give sharp lower and upper bounds
for the Hausdorff dimension of the intersection of a typical random covering
set with a fixed analytic set $F$ in terms of the Hausdorff and packing
dimensions of $F$ and the covering sets. We remark that, in \cite{LiShiehXiao},
Li, Shieh and Xiao derived their results from hitting probability estimates
for the discrete limsup random fractals obtained in \cite{KPX} and
their proof could also be extended to metric spaces. However, in
Section~\ref{metric}, we will give new proofs which avoid the use of the
discrete limsup fractals. In Section \ref{parallel}, we consider a class of
carpet type affine covering sets to demonstrate how our results apply in the
Euclidean setting also when the results of \cite{LiShiehXiao} do not.

For the methods of the proof in Section~\ref{metric}, as well as in
\cite{LiShiehXiao}, it is essential that the generating sets are balls or
ball-like. In Section~\ref{torus}, we consider the case of analytic generating
sets $(A_n)_{n\in\N}$ in the $d$-dimensional torus and the random covering sets
defined in \eqref{torusset}. With methods completely different from those in
Section~\ref{metric},
applying classical intersection results of Mattila \cite{Mattila84}, we prove
in Section \ref{torus} that the upper bounds given in Theorems
\ref{hittingcovering} and \ref{intersectionestimate} are valid also in this
general setting (see Theorem~\ref{upper}). However, the counterparts of the
lower bounds are not true in this generality as shown by
Example~\ref{bad case}. To overcome the problem presented in
Example~\ref{bad case}, at the end of Section~\ref{torus}, we give a
modification of the model where the generating sets are randomly rotated. In
this modified model, under a classical extra assumption on the dimensions,
a sharp almost sure lower bound for the Hausdorff dimension of the intersection
of a random covering set with a fixed analytic set is discovered. For full
details, see Theorem~\ref{hittingresult_dimh}.

Intersection properties of random sets very similar to ours have been under
consideration in the context of Diophantine approximation, see Bugeaud and
Durand \cite{BD}. They use them to support a conjecture \cite[Conjecture 1]{BD}
related to the irrationality exponent of points in the middle third
Cantor set. We discuss this connection in detail in Remarks~\ref{rem:BD} and
\ref{conclution}.

\section{Random covering sets in metric spaces}\label{metric}

\subsection{Notations and definitions}

In this section, we consider random covering sets in the context of $t$-regular
metric spaces. Assume that $(X,\rho)$ is a compact metric space endowed with a
Borel probability measure $\mu$ which is Ahlfors $t$-regular for some constant
$0<t<\infty$. Recall that $\mu$ is Ahlfors $t$-regular if
there exists a constant $0<C<\infty$ such that
\begin{equation}\label{eq:d-regular}
C^{-1} r^t\le \mu(B(x,r))\le C r^t
\end{equation}
for all $x\in X$ and $0<r<\diam X$, where  $B(x,r)$ 
 is the closed ball in metric $\rho$
centred at $x$ with radius $r$ and $\diam X$ is the diameter of $X$.
Given $A\subset X$ and $\varepsilon>0$, an {\it $\varepsilon$-net of $A$} is a
subset $Y\subset A$ such that
$A\subset\bigcup_{y\in Y}B(y,\varepsilon)$ and $\rho(x,y)\ge\varepsilon$ for all
$x,y\in Y$ with $x\neq y$. We denote Hausdorff, packing and upper box
counting dimensions in the metric space $(X,\rho)$ by $\dimh$, 
$\dimp$ and $\dimbu$, respectively. In a couple of places we have two different
metrics in the same space. Then we put the metric as a superscript to
emphasise which metric is used to calculate the dimension.

In $X$, we will need an analogue of dyadic cubes, which we define next. For all
integers $n\ge1$, assume that $\mathcal{Q}_n=\{Q_{n,i}\}_{i\in I_n}$ is a finite
family of pairwise disjoint Borel subsets of $X$ such that
$\bigcup_{i\in I_n} Q_{n,i}=X$ and, moreover,
for each $Q_{n,i}\in\mathcal{Q}_n$, there is $x_{n,i}\in X$ such that
\begin{equation}\label{inoutsideballs}
B(x_{n,i},2^{-n})\subset Q_{n,i}\subset B(x_{n,i}, C 2^{-n}),
\end{equation}
where $C>0$ is a universal constant. Further, we assume that the families
$\mathcal{Q}_n$ are nested: for $i\neq j$ and $m\ge n$, either
$Q_{m,j}\subset Q_{n,i}$ or $Q_{m,j}\cap Q_{n,i}=\emptyset$. For convenience, we
also define $\mathcal{Q}_0=\{X\}$. Set
$\mathcal{Q}=\bigcup_{n=0}^\infty\mathcal{Q}_n$. We recall that, starting from a
nested family of $(2^{-n})$-nets, such finite families $\mathcal{Q}_n$ may be
constructed in any metric space which satisfies a mild doubling condition, in
particular, in any $t$-regular metric space, see \cite{KRS}.

Next we define random covering sets. Let $(\Omega,\mathcal A,\P)$ be the
completion of the infinite product of $(X,\mathcal B(X),\mu)$, where
$\mathcal B(X)$ is the Borel $\sigma$-algebra on $X$. Let $(r_n)_{n\in\N}$ be a
decreasing sequence of positive numbers tending to zero.
For all $\x\in\Omega$, the covering set is defined as
\[
E(\x)=\limsup_{n\to\infty}B(x_n,r_n)
  =\bigcap_{k=1}^\infty\bigcup_{n=k}^\infty B(x_n,r_n).
\]

\subsection{Dimension and hitting probabilities of random covering sets}
\label{sec:hitting_metric}

It is easy to see that $\mu(E(\x))=0$ for all $\x\in\Omega$ if
$\sum_{n=1}^\infty (r_n)^t<\infty$, whereas by the Borel-Cantelli lemma and
Fubini's theorem, $\mu(E(\x))=1$ for $\P$-almost all
$\x\in\Omega$ if $\sum_{n=1}^\infty(r_n)^t=\infty$. There is a concrete
formula for the almost sure Hausdorff dimension of the random covering set:

\begin{theorem}\label{dimcoveringset}
For $\P$-almost all $\x\in\Omega$,
\[
\dimh E(\x)=\inf\biggl\{s\le t\mid\sum_{n=1}^\infty (r_n)^s<\infty\biggr\}
 =\limsup_{n\to\infty}\frac{\log n}{-\log r_n},
\]
with the convention $\inf\emptyset=t$.
\end{theorem}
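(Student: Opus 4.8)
The plan is to prove the two equalities separately, and I will first establish the elementary identity
\[
\inf\Bigl\{s\le t\ \Bigm|\ \sum_{n=1}^\infty (r_n)^s<\infty\Bigr\}
=\min\Bigl\{t,\ \limsup_{n\to\infty}\frac{\log n}{-\log r_n}\Bigr\},
\]
which is a purely deterministic fact about the decreasing sequence $(r_n)$. Writing $\alpha=\limsup_n \tfrac{\log n}{-\log r_n}$, one checks: if $s>\alpha$ then for large $n$ one has $\log n \le s'(-\log r_n)$ for some $s'<s$, hence $(r_n)^s\le n^{-s/s'}$ is summable; conversely if $s<\alpha$ then along a subsequence $n_k$ we have $(r_{n_k})^s\ge n_k^{-s/\alpha'}$ with $\alpha'<\alpha$, and using monotonicity of $(r_n)$ on dyadic blocks one produces a divergent lower bound for the series. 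Combining with the trivial upper cutoff by $t$ gives the identity; so it suffices to prove $\dimh E(\x)=\min\{t,\alpha\}$ almost surely.

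For the upper bound $\dimh E(\x)\le\min\{t,\alpha\}$ I would use the natural covering: the bound by $t$ is automatic since $E(\x)\subset X$ and $\dimh X=t$ by Ahlfors regularity. For the bound by $s$ whenever $s>\alpha$, fix such $s$ and note $\sum_n (r_n)^s<\infty$; for each $k$, the family $\{B(x_n,r_n)\}_{n\ge k}$ covers $E(\x)$, and since $2r_n\to 0$ these are admissible covers for Hausdorff measure, giving $\mathcal H^s(E(\x))\le \liminf_k \sum_{n\ge k}(2r_n)^s=0$ for every $\x$. Hence $\dimh E(\x)\le s$ surely, and letting $s\downarrow\alpha$ finishes the upper bound (note this direction needs no randomness).

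The main work is the lower bound $\dimh E(\x)\ge\min\{t,\alpha\}$ almost surely, and this is where I expect the real obstacle. Fix $s<\min\{t,\alpha\}$; I would build a random Cantor-type subset of $E(\x)$ carrying a measure with finite $s$-energy (or use a mass distribution / Frostman argument). The idea is to select, along a rapidly growing subsequence $n_1<n_2<\cdots$ realizing the $\limsup$ so that $r_{n_k}\ge n_k^{-1/s'}$ with $s<s'<\alpha$, a nested sequence of balls $B(x_{n_k},r_{n_k})$ each containing the next; at generation $k$ one packs roughly $\mu$-proportionally many disjoint balls of the next scale, using $t$-regularity to guarantee that a positive proportion of the independent points $x_n$ (for $n$ in a suitable block after $n_k$) land inside a prescribed ball, via a Borel--Cantelli / second-moment argument over the independent samples. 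The delicate points are: (i) arranging independence between the choices at successive generations (handled by using disjoint index blocks of the $n$'s for different generations), (ii) getting enough offspring balls with high probability — here the Ahlfors lower regularity $\mu(B(x,r))\ge C^{-1}r^t$ plus $s<t$ ensures the expected number of offspring in a parent ball grows, and a concentration estimate promotes this to an almost sure statement along the tree, and (iii) verifying the Frostman condition $\mu_{\text{Cantor}}(B(x,\delta))\lesssim \delta^s$ for the limiting measure, which follows from the geometric decay of ball radii between generations and the separation of offspring. One then invokes Frostman's lemma (valid in $t$-regular, hence doubling, metric spaces) to conclude $\dimh E(\x)\ge s$ a.s.; intersecting over a countable sequence $s\uparrow\min\{t,\alpha\}$ gives the result. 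The only subtlety special to the metric-space setting, as opposed to the torus, is that one must use the dyadic-cube family $\mathcal Q_n$ from \eqref{inoutsideballs} to organize the packing of offspring balls, but $t$-regularity makes all the counting estimates go through exactly as in $\R^d$.
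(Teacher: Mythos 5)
Your deterministic identity between the critical summability exponent and $\min\{t,\limsup_{n\to\infty}\tfrac{\log n}{-\log r_n}\}$, and your covering argument for the upper bound, are both correct (the upper bound indeed needs no randomness); the truncation by $t$ that you make explicit is the right reading of the second equality in the statement, since the raw $\limsup$ can exceed $t$. Where you genuinely diverge from the paper is the lower bound. The paper constructs nothing by hand: for $s<\alpha$ one has $\sum_n\mu\bigl(B(x,(r_n)^{s/t})\bigr)\asymp\sum_n(r_n)^s=\infty$, so by the second Borel--Cantelli lemma and Fubini the dilated limsup set $\limsup_{n\to\infty}B(x_n,(r_n)^{s/t})$ has full $\mu$-measure almost surely, and the Beresnevich--Velani mass transference principle \cite{BeresnevichVelani06} then yields $\mathcal{H}^s(E(\x))>0$ exactly as in \cite[Proposition 4.7]{JJKLS}; alternatively, the theorem is the special case $F=X$ of Corollary~\ref{cor:dimintersections}. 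Your route --- a nested random Cantor subset of $E(\x)$ built from disjoint index blocks along a subsequence realizing the $\limsup$, with a second-moment/concentration estimate for offspring counts and a mass distribution argument at the end --- is the classical direct approach; it can be made to work and is self-contained, but it amounts to reproving the relevant case of the mass transference principle and is considerably longer than what the paper intends.

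Two small points should you carry the construction out. First, choose the auxiliary exponent $s'$ with $s<s'<\min\{t,\alpha\}$ rather than merely $s'<\alpha$: when $s'>t$ the number of \emph{disjoint} offspring balls of radius about $N_{k+1}^{-1/s'}$ that fit in a parent of radius $\rho_k$ is capped at about $\rho_k^t N_{k+1}^{t/s'}$ by $t$-regularity, so the bookkeeping changes. Second, what you need at the final step is the elementary mass distribution principle (a measure $\nu$ on the Cantor set with $\nu(B(x,\delta))\lesssim\delta^s$ forces $\mathcal{H}^s>0$), not Frostman's lemma, which is the harder converse implication.
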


This result is well known if $X$ is the $d$-dimensional torus $\T^d$, and
similar techniques can be used to extend the result to $t$-regular metric
spaces. For instance, applying the mass transference principle
\cite[Theorem 3]{BeresnevichVelani06} by Beresnevich and Velani, the almost
sure value of $\dimh E(\x)$ can be determined exactly as in
\cite[Proposition 4.7]{JJKLS}. We note that Theorem~\ref{dimcoveringset}
follows also as a special case of Corollary~\ref{cor:dimintersections} below.

For later reference, let us define
\[
\alpha=\alpha(r_n)=\min\biggl\{t,\limsup_{n\to\infty}\frac{\log n}{-\log r_n}
  \biggr\}.
\]
Further, we set
\begin{equation}\label{nkdef}
\mathcal{N}_k=\{n\in\N\mid 2^{-(k+1)}\le r_n<2^{-k}\}\text{ and }
  n_k=\#\mathcal{N}_k,
\end{equation}
where the number of elements in a set $A$ is denoted by $\# A$. Finally, for
all analytic sets $F\subset X$, we define
\[
\mathcal{H}(F)=\{\x\in\Omega\mid E(\x)\cap F\neq\emptyset\}.
\]
In some of our results, we need to assume that there exists an increasing
sequence of positive integers $(k_i)_{i\in\N}$ such that
\begin{equation}\label{conditionC}
\lim_{i\to\infty}\frac{k_{i+1}}{k_i}=1\text{ and }
\lim_{i\to\infty}\frac{\log_2n_{k_i}}{k_i}=\alpha.
\end{equation}
This condition essentially means that, in a weak asymptotic sense, the
sequence $(r_n)_{n\in\N}$ behaves like $n^{-1/\alpha}$, see \cite{LiShiehXiao} for
various examples.

Now we are ready to state our first main theorem of this section.

\begin{theorem}\label{hittingcovering}
Let $F\subset X$ be an analytic set. With the above notation, we have
\begin{align}
\label{packingsmall}\P(\mathcal{H}(F))&=0\text{ if }\dimp F<t-\alpha,\\
\label{hausdorfflarge}\P(\mathcal{H}(F))&=1\text{ if }\dimh F>t-\alpha
  \text{ and}\\
\label{packinglarge}\P(\mathcal{H}(F))&=1\text{ if }\dimp F>t-\alpha
  \text{ and \eqref{conditionC} holds}.
\end{align}
\end{theorem}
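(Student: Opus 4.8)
The plan is to treat the three claims separately, using a first-moment (covering) argument for \eqref{packingsmall} and a second-moment / energy argument for \eqref{hausdorfflarge} and \eqref{packinglarge}.

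\textbf{The upper bound \eqref{packingsmall}.} Suppose $\dimp F < t - \alpha$. Choose $s$ with $\dimp F < s < t-\alpha$; then there is $s' \in (\dimp F, s)$ and, since packing dimension is stable under countable unions, we may cover $F$ by countably many sets each of finite $s'$-dimensional packing premeasure, so it suffices to treat a set $F$ whose upper box dimension satisfies $\dimbu F < s$. For each $n$, $F$ is covered by at most $\lesssim r_n^{-s}$ balls of radius $r_n$; the probability that a fixed such ball meets $B(x_n,r_n)$ is, by Ahlfors regularity, at most $C r_n^{t}$ (up to a constant), so $\P(B(x_n,r_n)\cap F\neq\emptyset)\lesssim r_n^{t-s}$. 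Because $s<t-\alpha$ and $\alpha=\limsup \log n / (-\log r_n)$, one checks that $r_n^{t-s}\le r_n^{t-\alpha-\delta}$ eventually for some $\delta>0$, and $\sum_n r_n^{t-\alpha-\delta}<\infty$ (this is exactly the summability built into the definition of $\alpha$ via Theorem \ref{dimcoveringset}). By Borel--Cantelli, almost surely only finitely many $B(x_n,r_n)$ meet $F$, so $E(\x)\cap F=\emptyset$ almost surely.

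\textbf{The lower bounds \eqref{hausdorfflarge} and \eqref{packinglarge}.} Here I would fix a parameter $s<t-\alpha$ close to the threshold and, by Frostman's lemma (valid for analytic sets), choose a Borel probability measure $\nu$ on $F$ with $\nu(B(x,r))\lesssim r^{s}$. The aim is a quantitative version of the second-moment method, run along the dyadic generations $\mathcal{Q}_k$. For each level $k$, let $W_k$ be the union of those cubes $Q\in\mathcal{Q}_k$ that contain some $x_n$ with $n\in\mathcal{N}_k$; then $\{W_k\}$ is a sequence of random open sets and $\limsup_k W_k\subset E(\x)$ up to a harmless comparison of radii $2^{-k}$ and $r_n$. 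Writing $Y_k=\nu(W_k)=\int \mathbf 1_{W_k}\,d\nu$, one computes $\E(Y_k)\gtrsim \min\{1, n_k\, 2^{-kt}\cdot 2^{ks}\}$ using \eqref{inoutsideballs} and Ahlfors regularity, and $\E(Y_k^2)$ is estimated by splitting the double integral over pairs $(y,y')$ according to the dyadic generation at which they separate; the Frostman bound $\nu(B(y,r))\lesssim r^s$ together with the cube structure gives $\E(Y_k^2)\lesssim (\E Y_k)^2 \cdot (\text{const})$ provided $s<t-\alpha$, because the relevant energy sum $\sum_j 2^{-j(t-s)}\cdot(\text{stuff})$ converges. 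A Paley--Zygmund-type inequality then yields $\P(Y_k>0)\ge\E(Y_k)^2/\E(Y_k^2)\ge c>0$ uniformly in $k$ along the subsequence where $n_k 2^{-k(t-s)}$ is bounded below, i.e. along a sequence realizing $\alpha$. To upgrade "positive probability infinitely often" to "probability one that $\limsup_k W_k$ meets $F$", I would use a Kochen--Stone / Borel--Cantelli argument exploiting asymptotic independence of the events across well-separated generations, or alternatively apply a zero-one law: the event $\mathcal H(F)$ is a tail event with respect to the i.i.d. sequence $(x_n)$ (changing finitely many $x_n$ does not affect the limsup), so $\P(\mathcal H(F))\in\{0,1\}$, and we have just shown it is positive, hence $=1$. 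This disposes of \eqref{packinglarge} when \eqref{conditionC} holds (which is precisely what guarantees a good subsequence $k_i$ with $\log_2 n_{k_i}/k_i\to\alpha$ and $k_{i+1}/k_i\to1$, so that the energy estimates can be summed), and \eqref{hausdorfflarge} follows by replacing the Frostman measure argument with the observation that when $\dimh F>t-\alpha$ one may choose $s$ with $\dimh F>s>t-\alpha$, obtain a Frostman measure of exponent $s$, and run the same second-moment estimate without needing the regularity of the sequence $(r_n)$ because the surplus $s-(t-\alpha)>0$ makes every generation contribute.

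\textbf{Main obstacle.} The technical heart is the second-moment estimate $\E(Y_k^2)\lesssim (\E Y_k)^2$: one must carefully organize the double integral $\int\!\!\int \P(y\in W_k,\,y'\in W_k)\,d\nu(y)\,d\nu(y')$ by conditioning on whether the two points lie in the same or distinct level-$k$ cubes and, in the latter case, summing the contributions of pairs separated at generation $j<k$ against the Frostman decay $\nu\times\nu(\{(y,y')\colon \rho(y,y')\sim 2^{-j}\})\lesssim 2^{-js}$. Getting the constant to be uniform in $k$ — so that Paley--Zygmund gives a $k$-independent lower bound — is where the hypothesis $s<t-\alpha$ (equivalently the convergence of the geometric-type series in the energy) and, for \eqref{packinglarge}, condition \eqref{conditionC} are genuinely used. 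The passage from $\limsup W_k$ to the actual covering set $E(\x)$, and the replacement of $\dimh F, \dimp F$ by a single Frostman exponent $s$, are routine once one recalls that for packing dimension one first decomposes $F$ into countably many pieces of small box dimension, as in \cite{LiShiehXiao}.
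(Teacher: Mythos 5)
Your argument for \eqref{packingsmall} is correct and is essentially the paper's: reduce to $\dimbu F<s$ via the identification of packing dimension with the modified upper box dimension, then a union bound and Borel--Cantelli (the paper merely organizes the sum by the generations $\mathcal{N}_m$ rather than over individual $n$). The lower bounds, however, contain two genuine gaps. First, for \eqref{packinglarge} Frostman's lemma is the wrong tool: the hypothesis is on $\dimp F$, and a set with $\dimp F>t-\alpha$ may well have $\dimh F=0$, in which case no measure $\nu$ on $F$ satisfies $\nu(B(x,r))\lesssim r^{s}$ for any $s>0$. Moreover your own second-moment computation forces the Frostman exponent to exceed the codimension: the correct first moment is $\E(Y_k)\approx\min\{1,n_k2^{-kt}\}$ (your extra factor $2^{ks}$ should not be there), and the diagonal contribution to $\E(Y_k^2)$ from pairs in the same cube is of order $n_k2^{-kt}\cdot2^{-ks}$, which is dominated by $(\E Y_k)^2\approx(n_k2^{-kt})^2$ only when $n_k\gtrsim2^{k(t-s)}$, i.e.\ only when $s\ge t-\alpha$. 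So a Frostman exponent $s<t-\alpha$, which is all that $\dimp F>t-\alpha$ could conceivably provide, gives no uniform Paley--Zygmund bound. The paper instead invokes Joyce--Preiss to pass to a compact subset of $F$ every nonempty relatively open piece of which has box dimension $>t-\alpha$, and runs the second moment on the \emph{count} $S_i$ of indices $n\in\mathcal{N}_{k_i}$ for which $U(x_n,r_n)$ meets a $(2^{-k_i})$-net of $V\cap F$; no measure on $F$ is used.

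Second, and more fundamentally, the passage from ``$\P(Y_k>0)\ge c$ along a subsequence'' to $\P(\mathcal H(F))=1$ does not work as written. Independence across generations and Borel--Cantelli give that almost surely $W_k\cap F\neq\emptyset$ for infinitely many $k$, but the witnessing point of $F$ may change with $k$, so this does not place a single point of $F$ in $\limsup_kW_k$; and the zero--one law cannot be invoked, because positivity of $\P(\mathcal H(F))$ is precisely what has not yet been established (only positivity of $\P(Y_k>0\text{ i.o.})$ has). The paper closes this gap by proving the ``infinitely many hits'' statement \emph{locally}: for every $V$ in a countable base of the topology with $V\cap F\neq\emptyset$, almost surely $U(x_n,r_n)\cap V\cap F\neq\emptyset$ for infinitely many $n$, whence $F\cap\bigcup_{n\ge k}U(x_n,r_n)$ is open and dense in the compact set $F$ for every $k$, and Baire's category theorem produces a point of $F$ in the limsup. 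Some device of this kind (localization plus Baire category, the discrete limsup-fractal machinery of \cite{KPX}, or a second moment applied to $\nu\bigl(\bigcup_{j=k}^{K}W_j\bigr)$ with $K\to\infty$ and then $k\to\infty$) is indispensable. For \eqref{hausdorfflarge} the paper simply defers to Li and Suomala \cite{LS}; your Frostman-plus-second-moment strategy is the right starting point there, but it needs the same repair.
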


In the circle $\T^1$, the analogues of \eqref{packingsmall} and
\eqref{packinglarge} were established by Li, Shieh and Xiao \cite{LiShiehXiao}.
Bugeaud and Durand \cite{BD} also recovered these results within the context of
Diophantine approximation. Li and Suomala \cite{LS} proved the analogue of
\eqref{hausdorfflarge} in the torus $\T^d$ and showed that the
assumption $\dimp F>d-\alpha$ alone is not enough to guarantee that
$\P(\mathcal{H}(F))>0$.

\begin{proof}[Proof of Theorem \ref{hittingcovering}]
We start from the claim \eqref{packingsmall}. For each $m\in\N$, let $Y_m$ be a
$(2^{-m})$-net of $X$ and $N_m=\#Y_m$. Since $X$ is compact and $t$-regular,
there is a constant $0<C_1<\infty$ such that
\begin{equation*}
C_1^{-1} 2^{tm}\le N_m\le C_1 2^{tm}
\end{equation*}
for all $m\in\N$. Using the fact that the packing dimension is equal
to the modified upper box counting dimension, which is due to 
Tricot \cite{Tricot82} (see also 
\cite[Proposition 3.8]{F03}), that is,
\[
\dimp F=\inf\biggl\{\sup_{n}\dimbu F_n\mid F\subset
\bigcup_{n=1}^\infty F_n\biggr\},
\]
it suffices to show that $\P(\{\x\in\Omega\mid E(\x)\cap F\neq\emptyset\})=0$
whenever $\dimbu F<t-\alpha$.

Let $\dimbu F<\gamma<t-\alpha$, $\alpha<\beta<t-\gamma$ and
\[
M_m=\#\{y\in Y_m\mid F\cap B(y,2^{-m})\neq\emptyset\}.
\]
Then $M_m<2^{m\gamma}$ and $n_m<2^{m\beta}$ for all $m$ large enough, say
$m\ge N_0$ (recall \eqref{nkdef}). Denote by $B_{m,1},\ldots, B_{m,M_m}$ the balls
among $\{B(y,2^{-m})\}_{y\in Y_m}$ which intersect $F$. For all $i=1,\dots,M_m$
and $n\in\mathcal{N}_m$, we have $B(x_n,r_n)\cap B_{m,i}\ne\emptyset$ only if
$d(x_n,y)\le 2^{-m+1}$, where $y$ is the centre of $B_{m,i}$. Since $x_n$ is
distributed according to $\mu$, \eqref{eq:d-regular} implies the existence of
$0<C_2<\infty$ such that
\[
\P(\{\x\in\Omega\mid B(x_n,r_n)\cap B_{m,i}\neq\emptyset\})\le C_2 2^{-mt}.
\]
Whence,
\begin{align*}
&\P\bigl(\{\x\in\Omega\mid B(x_n,r_n)\cap B_{m,i}\neq\emptyset\text{ for some }
  i\in\{1,\dots,M_m\}, n\in\mathcal{N}_m\}\bigr)\\
&\le C_2 n_m M_m 2^{-mt}\le C_2 2^{m(\gamma+\beta-t)}.
\end{align*}
Since $\gamma+\beta-t<0$, it follows from the Borel-Cantelli lemma that for
$\P$-almost all $\x\in\Omega$,
\[
\bigcup_{n\in\mathcal{N}_m}B(x_n,r_n)\cap F=\emptyset
\]
for all large enough $m$, implying that $E(\x)\cap F=\emptyset$.

The proof of \eqref{hausdorfflarge} given in \cite{LS} for the case $X=\T^d$
can be generalised in a straightforward way to the current setting (replace 
the dyadic cubes by the generalised dyadic cubes $\mathcal{Q}$ throughout). 
We will not repeat the
details.

To prove \eqref{packinglarge}, let $F\subset X$ with  $\dimp F>t-\alpha$.
Replacing $F$ by a subset if necessary, we may assume that $F$ is compact and
that $\dim_B (V\cap F)>t-\alpha$ whenever $V$ is an open set with
$V\cap F\neq\emptyset$ (see \cite{JP95}). It suffices to show that
\begin{equation}\label{infinitelymanyhits}
\P(\{\x\in\Omega\mid U(x_n,r_n)\cap V\cap F\neq\emptyset
 \text{ for infinitely many }n\in\N\})=1,
\end{equation}
where $U(x_n,r_n)$ is the interior of $B(x_n,r_n)$ and $V\subset X$ is an open
set such that $V\cap F\neq\emptyset$. Indeed, if this holds, letting $V$ run
over a countable base of the topology of $X$, it follows that for $\P$-almost
all $\x\in\Omega$, the set
\[
F\cap\bigcup_{n=k}^\infty U(x_n,r_n)
\]
is open and dense in $F$ for all $k\in\N$. Therefore, by the Baire's category
theorem,
\[
F\cap\limsup_{n\to\infty}B(x_n,r_n)\supset F\cap\limsup_{n\to\infty}U(x_n,r_n)
 \neq\emptyset.
\]

It remains to prove \eqref{infinitelymanyhits}. Fix $V\subset X$, let
$(k_i)_{i\in\N}$ be as in \eqref{conditionC} and define random variables
\[
S_i(\x)=\#\{n\in\mathcal{N}_{k_i}\mid U(x_n,r_n)\cap V\cap F\neq\emptyset\}
\]
for all $i\in\N$. Pick $\gamma$ and $\beta$ such that
$\dimbu (V\cap F)>\gamma>t-\beta>t-\alpha$. For all $m\in\N$, let $Z_m$ be a
$(2^{-m})$-net of $V\cap F$. Replacing $(k_i)_{i\in\N}$ by a subsequence, if
necessary, we may conclude by the first part of \eqref{conditionC} that
\begin{equation}\label{Mkiiso}
M_{k_i}:=\#Z_{k_i}>2^{\gamma k_i}\text{ for all }i\in\N.
\end{equation}
By \eqref{eq:d-regular}, there exists a constant $0<C_3<\infty$ such that, for
all $n\in\mathcal{N}_{k_i}$ and $z\in Z_{k_i}$, we have
\begin{equation}\label{fixednz}
\begin{split}
C_3^{-1}2^{-tk_i}\le\P(\{\x\in\Omega\mid z\in U(x_n,r_n)\})
   \text{ and}\\
\P(\{\x\in\Omega\mid z\in U(x_n,3r_n)\})\le C_3 2^{-tk_i}.
\end{split}
\end{equation}
Since $X$ is $t$-regular and $Z_{k_i}$ is a $(2^{-k_i})$-net, there is a constant
$0<C_4<\infty$ independent of $i\in\N$ such that any ball of radius $2^{-k_i}$
contains at most $C_4$ points $z\in Z_{k_i}$. Combining this with
\eqref{fixednz}, yields
\begin{equation}\label{fixedn}
\begin{split}
C_5^{-1}M_{k_i}2^{-tk_i}\le\P(\{\x\in\Omega\mid Z_{k_i}\cap U(x_n,r_n)\neq\emptyset
  \})\text{ and}\\
\P(\{\x\in\Omega\mid Z_{k_i}\cap U(x_n,3r_n)\neq\emptyset\})
  \le C_5 M_{k_i}2^{-tk_i},
\end{split}
\end{equation}
where $C_5=C_3C_4$. Note that $U(x_n,r_n)\cap V\cap F\neq\emptyset$ only if
$U(x_n,3r_n)\cap V\cap Z_{k_i}\neq\emptyset$.
Applying \eqref{fixedn} for all $n\in\mathcal{N}_{k_i}$, we conclude that
\begin{equation}\label{firstmoment}
C_5^{-1}n_{k_i}M_{k_i}2^{-tk_i}\le\E(S_i)\le C_5 n_{k_i}M_{k_i}2^{-tk_i}.
\end{equation}

To estimate the second moment $\E(S_i^2)$, note that the events
$Z_{k_i}\cap U(x_n,r_n)\neq\emptyset$ are independent for different $n\in\N$.
Thus, by \eqref{fixedn},
\begin{equation}\label{secondmoment}
\begin{split}
&\E(S_i^2)\le\sum_{p\in\mathcal{N}_{k_i}}\P(\{\x\in\Omega\mid Z_{k_i}\cap U(x_p,3r_p)
  \neq\emptyset\})\\
&+\sum_{\substack{p,q\in\mathcal{N}_{k_i}\\p\ne q}}\P(\{\x\in\Omega\mid Z_{k_i}\cap
  U(x_p,3r_p)\neq\emptyset\})\P(\{\x\in\Omega\mid Z_{k_i}\cap U(x_q,3r_q)
  \neq\emptyset\})\\
&\le C_5 n_{k_i} M_{k_i}2^{-tk_i}+(C_5)^2(n_{k_i})^2(M_{k_i})^22^{-2tk_i}.
\end{split}
\end{equation}
Since $\beta<\alpha$, we have that $n_{k_i}\ge 2^{k_i\beta}$ for all large
$i\in\N$ by \eqref{conditionC}. Recalling \eqref{Mkiiso} and $\gamma+\beta>t$,
we observe that the second term in the upper bound in \eqref{secondmoment} is
dominating. Whence, applying the Paley-Zygmund inequality, we conclude the
existence of a constant $0<C_6<\infty$ satisfying
\[
\P(\{\x\in\Omega\mid S_i(\x)>0\})\ge\frac{\E(S_i)^2}{\E(S_i^2)}\ge C_6>0
\]
for all $i\in\N$. Since the random variables $(S_i)_{i\in\N}$ are independent,
the Borel-Cantelli lemma implies that for $\P$-almost all $\x\in\Omega$,
$S_i(\x)>0$ for infinitely many $i\in\N$, completing the proof of
\eqref{infinitelymanyhits}.
\end{proof}

We will next investigate the dimension of the intersection $E(\x)\cap F$
aiming to generalise \cite[Theorem 2.4]{LiShiehXiao} to the metric setting. We
will need the following lemma, which is well known in the case $t\in\N$
and $X=[0,1]^t$. In the metric setting, we derive the result for a version of
fractal percolation using results of Lyons \cite{Lyons} on tree percolation.
Although we need this only when $X$ is $t$-regular, we note that the
proof works for any complete metric space satisfying a mild doubling condition.

\begin{lemma}\label{percolemma}
For any $s>0$, there is a probability space
$(\widetilde{\Omega},\Gamma,\widetilde{\P})$ and, for each
$\omega\in\widetilde{\Omega}$, a compact set $A(\omega)\subset X$ such that,
for any analytic set $E\subset X$, we have
\[
\widetilde{\P}(\{\omega\in\widetilde{\Omega}\mid A(\omega)\cap E
  =\emptyset\})=1
\]
provided $\dimh E<s$, while
\[
\|\dimh(A(\omega)\cap E)\|_{L^\infty(\widetilde{\P})}=\dimh E-s
\]
if $\dimh E\ge s$.
\end{lemma}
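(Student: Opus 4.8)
The plan is to realise $A(\omega)$ as a Mandelbrot-type fractal percolation limit set built on the nested dyadic-like family $\mathcal{Q}=\bigcup_n\mathcal{Q}_n$, and then to transfer the survival/dimension dichotomy from the associated Galton--Watson-type tree via Lyons' percolation criterion on trees \cite{Lyons}. Concretely, fix $s>0$ and choose an integer $N$ (or keep all levels) so that each $Q_{n,i}\in\mathcal{Q}_n$ has roughly $2^{t}$ children in $\mathcal{Q}_{n+1}$, up to multiplicative constants coming from \eqref{inoutsideballs} and $t$-regularity. Retain each child cube independently with probability $p=2^{-s}$ (passing to a subdivision into $2^{\lceil s\rceil}$-adic scales if one wants $p$ of the exactly right form; the constants do not affect the conclusion). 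Let $A(\omega)$ be the intersection over all levels of the union of retained cubes; it is a compact set, and $(\widetilde\Omega,\Gamma,\widetilde\P)$ is the corresponding product probability space.

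The core of the argument is a correspondence between $A(\omega)\cap E$ being nonempty and percolation on a tree. Given an analytic set $E$, associate to it the subtree $T_E$ of the full $\mathcal{Q}$-tree consisting of those cubes that intersect $E$; then $A(\omega)\cap E\neq\emptyset$ forces an infinite retained ray in $T_E$, i.e.\ survival of percolation on $T_E$. Lyons' theorem states that Bernoulli$(p)$ percolation on a tree $T$ survives with positive probability iff $T$ carries a nonzero flow of finite energy against the conductances $p^{-|\cdot|}$, which happens precisely when the ``branching number'' $\mathrm{br}(T)>1/p=2^{s}$. The branching number of $T_E$ is comparable to the (upper) Minkowski / Hausdorff dimension content of $E$ along the $\mathcal{Q}$-filtration, so $\mathrm{br}(T_E)\le 2^{\dimh E}$ (after passing to analytic-set regularisations and using that $\dimh$ equals the tree Hausdorff dimension via the natural coding), giving the first assertion: if $\dimh E<s$ then $\mathrm{br}(T_E)<2^{s}=1/p$, percolation on every relevant subtree dies a.s., and hence $A(\omega)\cap E=\emptyset$ almost surely. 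For the quantitative part, one uses the standard second-moment / energy computation: conditioned on survival, the retained tree inside $T_E$ has branching number $\mathrm{br}(T_E)/2^{s}$, so the surviving set has Hausdorff dimension $\log_2(\mathrm{br}(T_E))-s=\dimh E-s$; taking $E$ with $\dimh E\ge s$ and noting that the bound $\dimh(A(\omega)\cap E)\le \dimh A(\omega)=t-s$ together with the matching lower bound valid on a positive-probability (hence, by a $0$--$1$ type argument over countably many scales, essentially sure up to the $L^\infty$ norm) event yields $\|\dimh(A(\omega)\cap E)\|_{L^\infty(\widetilde\P)}=\dimh E-s$. One should be slightly careful that the upper bound $\dimh(A(\omega)\cap E)\le\dimh E-s$ holds \emph{almost surely} (a first-moment estimate on $s$-Hausdorff-type sums suffices, using $t$-regularity to count cubes), while the lower bound is only asserted in the essential-supremum sense, which is exactly what the statement claims.

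The main obstacle I expect is the passage from the clean tree statements of \cite{Lyons} to Hausdorff dimension of the \emph{geometric} intersection $A(\omega)\cap E$ for an arbitrary analytic $E$ in a general $t$-regular space: one must control the distortion in \eqref{inoutsideballs} (cubes are only ball-like up to the universal constant $C$), verify that the natural coding map from the $\mathcal{Q}$-tree boundary to $X$ is bi-Hölder enough that tree-dimension equals metric $\dimh$, and handle analytic (not closed) $E$ via Souslin-scheme / capacitability arguments so that the percolation estimates, which are naturally phrased for closed sets or for the ``shadow'' subtrees, apply. Mild doubling of $X$ is what makes all the cube-counting uniform, which is why the proof works in that generality even though only $t$-regularity is needed here. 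Once these measurability and distortion issues are dispatched, the energy/branching-number dichotomy of Lyons does all the probabilistic work, and the dimension formula $\dimh E-s$ falls out of the standard Galton--Watson dimension-drop heuristic made rigorous by the same flow-energy method.
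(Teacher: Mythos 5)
Your proposal is correct and follows essentially the same route as the paper: a fractal percolation built on the generalised dyadic cubes $\mathcal{Q}$ with retention probability $p=2^{-s}$, identified with Bernoulli percolation on the associated tree, with the dichotomy and the dimension formula imported from Lyons \cite{Lyons} after checking that the coding map $\Pi\colon\partial T\to X$ preserves Hausdorff dimension (the paper verifies $\dimh E=\dimhk\Pi^{-1}(E)$ using \eqref{inoutsideballs} and $t$-regularity, exactly the distortion issue you flag). The paper simply cites \cite[\S 7]{Lyons} for both the avoidance statement and the $L^\infty$ identity $\dimh E-s$, rather than re-deriving them via branching numbers and energies as you sketch, but the content is the same.
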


\begin{proof}
The model example of such random family of sets in the case $X=[0,1]^d$ is
given by a fractal percolation process and we will prove the lemma by
constructing an analogue of the fractal percolation in the space $X$ by using
the generalised dyadic cubes $\mathcal{Q}$ as defined in the beginning of this
section.

Let $p=2^{-s}$. For each $Q\in\mathcal{Q}$, we attach a Bernoulli random
variable $Z(Q)$ taking value $1$ with probability $p$ and value $0$ with
probability $1-p$. Further, we assume that these random variables are
independent for different $Q\in\mathcal{Q}$. We define the random fractal
percolation set as
\[
A=\bigcap_{n\in\N}\bigcup_{\substack{Q\in\mathcal{Q}_n\\Z(Q)=1}}\overline{Q},
\]
where $\overline Q$ is the closure of the set $Q$. Formally,
$\widetilde\Omega=\{0,1\}^{\mathcal Q}$, $\Gamma$ is the completion of the Borel
$\sigma$-algebra on $\widetilde\Omega$ and $\widetilde\P$ is the infinite
product over $\mathcal Q$ of the measures $(1-p)\delta_0+p\delta_1$.
There is an apparent tree structure behind the definition of $A$. Label each
$Q\in\mathcal{Q}$ with a vertex $v_Q$ and let $T$ be a graph with vertex set
$\{v_Q\}_{Q\in\mathcal Q}$. Draw an edge between vertices $v_{Q_{n,i}}$ and
$v_{Q_{m,j}}$ if and only if $|n-m|=1$ and $Q_{n,i}\cap Q_{m,i}\neq\emptyset$. Then
$T$ is a tree and we distinguish $v_X$ as its root.
The boundary of the tree $\partial T$ consists of all infinite paths
$v_0v_1v_2\ldots$, where $v_m=v_{Q_m}$ for some $Q\in\mathcal{Q}_m$ and
$Q_m\subset Q_n$, if $m\ge n$. Define a projection
$\Pi\colon\partial T\rightarrow X$ as
$\{\Pi(v_0v_1v_2\ldots)\}=\bigcap_{n=0}^\infty\overline{Q_n}$. Note that
$\Pi(\partial T)=X$. Then the law of $A$ given by
$\widetilde\P$ is the same as that of $\Pi(\widetilde{A})$, where
$\widetilde A\subset\partial T$ is determined by the component of the root in
the Bernoulli $p$-percolation on the tree $T$ (see \cite{Lyons}).

For $v=v_0v_1v_2\ldots, u=u_0u_1 u_2\ldots\in\partial T$, we define
\begin{equation*}
\kappa(v,u)=\begin{cases}
0 & \text{ if }v=u\\
2^{-\min\{i\,\mid\, v_i\neq u_i\}} & \text{ otherwise}.
\end{cases}
\end{equation*}
Then $(\partial T,\kappa)$ becomes a metric space. Using \eqref{inoutsideballs},
we find a constant $0<C<\infty$ such that $\diam(Q)\le C 2^{-n}$ for all
$n\in\N$ and $Q\in\mathcal{Q}_n$. Further, by \eqref{eq:d-regular}, there
exists a constant $0<\widetilde C<\infty$ such that for all
$2^{-n}\le r<2^{-n+1}$ and $x\in X$, the ball $B(x,r)$ may be covered by
$\widetilde C$ elements $Q\in\mathcal{Q}_n$. From these observations it readily
follows that if $E\subset X$ then
\begin{equation}
\dimh E=\dimhk\Pi^{-1}(E),
\end{equation}
where $\dimhk$ is the Hausdorff dimension with respect to the metric $\kappa$.
Whence, to finish the proof, we only need to verify that,
for any analytic set $F\subset\partial T$, we have
\begin{align*}
\widetilde{\P}(\{\omega\in\widetilde{\Omega}\mid\widetilde{A}\cap F
  =\emptyset\})&=1\text{ if }\dimh F<s\text{ and}\\
\|\dimh (A\cap F)\|_{L^\infty(\widetilde{P})}&=\dimh F-s\text{ if }\dimh F\ge s.
\end{align*}
But these results can be found in \cite[\S 7]{Lyons}, so we are done.
\end{proof}

We are now able to generalise the results of \cite[Theorem 1.4]{LiShiehXiao}
and \cite[Corollary 1.5]{LS} on the Hausdorff dimension of the intersections
$E(\x)\cap F$, when $F\subset X$ is a fixed analytic set.

\begin{theorem}\label{intersectionestimate}
If
$F\subset X$ is analytic, then for $\P$-almost every $\x\in\Omega$,
\[
\max\{0,\alpha+\dimh F-t\}\le\dimh (E(\x)\cap F)\le\max\{0,\alpha+\dimp F-t\}.
\]
\end{theorem}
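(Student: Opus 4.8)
The plan is to derive both bounds by combining the hitting-probability results of Theorem~\ref{hittingcovering} with the fractal percolation device of Lemma~\ref{percolemma}, in the spirit of the classical codimension argument. For the upper bound, first observe that if $\alpha+\dimp F-t\le 0$ the claim is vacuous, so assume $\dimp F>t-\alpha$. Fix any $s>\alpha$ and run the percolation of Lemma~\ref{percolemma} \emph{independently} of $\x$; the product probability space carries both $E(\x)$ and the random compact set $A(\omega)$. For $\widetilde\P$-almost every $\omega$ we have $\dimh(A(\omega)\cap F)\le\dimh F-s$ (or $A(\omega)\cap F=\emptyset$), hence $\dimp(A(\omega)\cap F)\le\dimp F$, so by \eqref{packingsmall} applied with the analytic set $A(\omega)\cap F$ in place of $F$, the slice $E(\x)\cap A(\omega)\cap F$ is empty for $\P$-a.e.\ $\x$ as soon as $\dimp(A(\omega)\cap F)<t-\alpha$; but we want dimension, not emptiness, so instead I would use the standard intersection inequality $\dimh(E(\x)\cap F)\le\dimh(A(\omega))+\text{codim correction}$ --- more precisely, I would choose $s$ slightly larger than $\alpha+\dimp F-t$... . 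Let me restate cleanly: for the upper bound set $\beta=\alpha+\dimp F-t$ and suppose $E(\x)\cap F$ had Hausdorff dimension exceeding $\beta$ on a positive-probability set of $\x$; then $\dimp(E(\x)\cap F)\ge\dimh(E(\x)\cap F)>\beta$, and intersecting with an independent percolation set $A(\omega)$ of parameter $s$ with $t-\dimp F<s<\alpha$ would, by \eqref{packinglarge}-type reasoning, still leave something nonempty with positive probability, while Lemma~\ref{percolemma} forces $A(\omega)\cap(E(\x)\cap F)=\emptyset$ a.s.\ once $s>\dimh(E(\x)\cap F)$; optimising $s$ and using that $\dimh E(\x)=\alpha$ a.s.\ (Theorem~\ref{dimcoveringset}) gives the contradiction. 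Thus $\dimh(E(\x)\cap F)\le\max\{0,\alpha+\dimp F-t\}$ for a.e.\ $\x$.

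For the lower bound, again the claim is trivial unless $\alpha+\dimh F-t>0$, so assume this. The idea is that percolating inside $F$ produces, with positive probability, a subset whose Hausdorff dimension is close to $\dimh F-s$ and which is still ``fat enough'' in the sense of \eqref{hausdorfflarge} to be hit by $E(\x)$. Concretely, pick $s$ with $0<s<\alpha+\dimh F-t$, run the percolation of Lemma~\ref{percolemma} on $X$ with parameter $s$, and note that on a positive-$\widetilde\P$-probability event the compact set $F_\omega:=A(\omega)\cap F$ satisfies $\dimh F_\omega>\dimh F-s-\varepsilon>t-\alpha$. By \eqref{hausdorfflarge}, conditionally on such $\omega$, for $\P$-almost every $\x$ we have $E(\x)\cap F_\omega\neq\emptyset$, hence $E(\x)\cap F\supset E(\x)\cap F_\omega\neq\emptyset$; but more is needed, namely a dimension bound, so I would instead apply \eqref{hausdorfflarge} not just to one percolation set but to a sequence, or rather invoke the Baire-category / ``open and dense'' mechanism from the proof of \eqref{packinglarge} to conclude that $\dimh(E(\x)\cap F)\ge\dimh F_\omega$ on the relevant event. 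Letting $s\uparrow\alpha+\dimh F-t$ and using a countable sequence $s_j$ together with the essential-supremum statement of Lemma~\ref{percolemma}, a Fubini argument over $\widetilde\Omega\times\Omega$ upgrades ``positive probability'' to ``almost every $\x$'': for each $j$ the set of $\x$ with $\dimh(E(\x)\cap F)\ge\alpha+\dimh F-t-1/j$ has full $\P$-measure, and intersecting over $j$ finishes the proof.

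The main obstacle is the measurability and Fubini bookkeeping needed to pass from the joint statement ``for $\widetilde\P\times\P$-a.e.\ $(\omega,\x)$, $\dimh(E(\x)\cap A(\omega)\cap F)\ge\dots$'' to the marginal statement ``for $\P$-a.e.\ $\x$, $\dimh(E(\x)\cap F)\ge\dots$'', because one must know that the good event has full measure in $\x$ \emph{for each fixed} $\omega$ in a full-measure set, or else extract this from the $L^\infty$ bound in Lemma~\ref{percolemma}. This is exactly where the precise form of Lemma~\ref{percolemma} --- an almost sure, not merely in-probability, dimension identity for \emph{every} analytic set simultaneously --- is indispensable: it lets us apply the lemma with the random analytic set $E(\x)\cap F$ after conditioning on $\x$, since the percolation is independent of $\x$, and Fubini then yields the marginal in $\x$. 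A secondary technical point is verifying that $\dimh(E(\x)\cap F)$ is a measurable function of $\x$ (standard, since Hausdorff dimension of an analytic-set-valued measurable map is measurable) and that the percolation construction of Lemma~\ref{percolemma} can be realised on a product space independent of $\Omega$; both are routine but must be stated. Everything else --- the choice of the auxiliary exponents $s$, the use of Theorem~\ref{dimcoveringset} to identify $\alpha$ with $\dimh E(\x)$, and the reduction to the nontrivial ranges of $\dimh F$ and $\dimp F$ --- is bookkeeping around the two ingredients already in hand.
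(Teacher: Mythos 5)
Your lower bound is essentially the paper's argument: the paper derives it from Lemma~\ref{percolemma} together with \eqref{hausdorfflarge} via the co-dimension argument of \cite[Lemma 3.4]{KPX}, which is exactly the Fubini scheme you describe in your final paragraph (percolate at rate $s<\alpha+\dimh F-t$, use the $L^\infty$ statement to find $\omega$ with $\dimh(A(\omega)\cap F)>t-\alpha$, hit this set by \eqref{hausdorfflarge}, then read the avoidance half of Lemma~\ref{percolemma} backwards for fixed $\x$ to conclude $\dimh(E(\x)\cap F)\ge s$ on a set of positive measure). Two repairs are needed. First, the detour through the Baire-category mechanism of \eqref{packinglarge} is a dead end: that mechanism only ever produces nonemptiness, never a dimension bound; the bound $\dimh(E(\x)\cap F)\ge s$ comes solely from the first assertion of Lemma~\ref{percolemma}. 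Second, Fubini does not upgrade positive probability to full measure; you must note that $\{\x\in\Omega\mid\dimh(E(\x)\cap F)\ge s\}$ is a tail event and invoke Kolmogorov's zero-one law.

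The upper bound is where the proposal genuinely breaks. The paper obtains it by a direct first-moment covering estimate (as in the proof of Theorem 1.4 of \cite{LiShiehXiao}): reduce $\dimp$ to $\dimbu$ via Tricot's decomposition, cover $F$ by roughly $2^{m\gamma}$ balls of radius $2^{-m}$ with $\gamma>\dimbu F$, observe that the expected number of pairs $(n,\text{ball})$ with $n\in\mathcal{N}_m$ and $B(x_n,r_n)$ meeting the ball is of order $2^{m(\gamma+\alpha-t)}$, and sum the resulting $s$-dimensional premeasures. Your percolation-based contradiction does not close. You assume $\dimh(E(\x)\cap F)>s>\beta$ in order to get $A(\omega)\cap E(\x)\cap F\neq\emptyset$ with positive probability, and then assert that Lemma~\ref{percolemma} forces emptiness ``once $s>\dimh(E(\x)\cap F)$'' --- but that is the opposite of the inequality you just imposed, so no contradiction is produced. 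The only way to make this scheme work is to apply \eqref{packingsmall} to the random target $A(\omega)\cap F$, which requires $\dimp(A(\omega)\cap F)<t-\alpha$, i.e.\ a \emph{packing}-dimension drop $\dimp(A(\omega)\cap F)\le\dimp F-s$ under percolation. Lemma~\ref{percolemma} gives no such information: it controls only the Hausdorff dimension of the intersection. Such a packing bound can indeed be proved for fractal percolation, but only by running its own first-moment cube count, at which point you have essentially reconstructed the paper's direct argument. As written, the upper bound is unproved.
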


\begin{proof}
With Lemma \ref{percolemma} and \eqref{hausdorfflarge} at hand, the lower
bound follows from a similar co-dimension argument as in \cite[Lemma 3.4]{KPX}.
The upper bound, in turn, can be obtained by a direct first-moment estimation
(see the proof of Theorem 1.4 in \cite{LiShiehXiao}). We omit the details.
\end{proof}

As an immediate consequence of Theorem \ref{intersectionestimate}, we obtain
the following corollary.

\begin{corollary}\label{cor:dimintersections}
Let $F\subset X$ be an analytic set
with $\dimh F=\dimp F$. Then for $\P$-almost all $\x\in\Omega$,
\[
\dimh (E(\x)\cap F)=\max\{0,\alpha+\dimh F-t\}.
\]
In particular, $\dimh E(\x)=\alpha$ for $\P$-almost all $\x\in\Omega$.
\end{corollary}

\begin{remark} \label{rem:BD}
A problem suggested by Mahler on how well can points, say, in the middle third
Cantor set $K$ be approximated by rational points, has stemmed a number of
results \cite{BFR11, BFKRW, EFS11, PV05, W01}, measuring the sizes of the
intersection of $K$ with sets $\mathcal W(\psi)$ from \eqref{eq:well} for
different values of the approximation speed $\psi$. These results are in part
inconclusive, and lead to conjectures on the size of the set
$K\cap \mathcal W(q^{-\tau})$ in \cite{BD, LSV07}. In particular, Bugeaud and 
Durand
\cite[Conjecture 1]{BD} conjectured that
\[
\dimh(K\cap\mathcal W(q^{-\tau}))=\max\{\dimh\mathcal W(q^{-\tau})+\dimh K-1,
  \tfrac 1\tau \dimh K\}.
\]
Following their reasoning, the first term in the maximum is realised for slow
approximation speeds, where the intervals giving the limsup set are big, and
the latter term appears when $\tau$ is large and the proportion of rationals
inside the set $K$ start to play a role. Bugeaud and Durand \cite{BD} have 
offered
evidence that supports their conjecture, in particular, by building a model of
random Diophantine approximation, where the centres of the generating intervals
are independent and uniformly distributed, either in the Cantor set or in the
whole circle $\mathbb T^1$ \cite[Section 2]{BD}. The random model is based on a
conjecture of Broderick, Fishman and Reich \cite{BFR11} on the proportion of
rationals in the Cantor set.

In our Corollary \ref{cor:dimintersections}, the size of the intersection of a
random covering set with a given analytic set $F$ is measured. 
This parallels the results of Bugeaud and Durand in the case of slow 
approximation speed, and could serve as a basis for more general results on 
random Diophantine approximation. We recall that, while our result concerns a 
wider class of sets than that of Bugeaud and Durand, the measure used by them 
to define the random variables is different from ours.
Notice that when the
generating sets are ball-like, many methods for the fast approximation also
apply directly (see \cite[Lemma 5]{BD}).
\end{remark}

\section{Covering sets in tori}\label{torus}

\subsection{Notations}

In this section, we study the hitting probabilities of random covering sets in
$d$-dimensional torus $\T^d$. We identify $\T^d$ with
$[-\tfrac 12,\tfrac 12[^d\subset\R^d$ and denote by $\Pi\colon\R^d\to\T^d$ the
natural covering map. We use the notation $\widetilde B$ for the lift of
$B\subset\T^d$ and denote the Lebesgue measure on $\T^d$ and $\R^d$ by $\L$.
We consider the probability space $(\Omega,\mathcal A,\P)$ which is the
completion of the infinite product of $(\T^d,\mathcal B(\T^d),\L)$. Let
$(A_n)_{n\in\N}$ be a sequence of analytic subsets of
$[-\tfrac 12,\tfrac 12[^d\subset\mathbb R^d$. For all $\x\in\Omega$, define the
covering set by
\[
E(\x)=\limsup_{n\to\infty}(x_n+\Pi(A_n)).
\]
It follows from Lemma~\ref{measurability} below that the set
$\{\x\in\Omega\mid\dimh E(\x)\le s\}$ is $\P$-measurable and, clearly, a tail
event for all $0\le s\le d$. Therefore, by Kolmogorov's zero-one law, there
exists $s_0\in[0,d]$ such that $\dimh E(\x)=s_0$ for $\P$-almost all
$\x\in\Omega$. The value of $s_0$ has been calculated for a general class of
sets $(A_n)_{n\in\N}$ in \cite[Theorem 1.1]{FJJS}.

\subsection{Upper bounds}

We start with a measurability lemma. Recall that a set is called analytic if
it is a continuous image of a Borel set, and a set is universally measurable if
it is $\mu$-measurable for any $\sigma$-finite Borel measure $\mu$.
By \cite[Theorem 21.10]{Ke}, analytic sets are universally measurable.

\begin{lemma}\label{measurability}
The covering set $E(\x)$ is analytic for all $\x\in\Omega$. Furthermore,
assuming that $F\subset\mathbb T^d$ is analytic and $t\in\mathbb R$, the sets
\begin{align*}
B&=\{(\x,z)\in\Omega\times\mathbb T^d\mid (E(\x)+z)\cap F=\emptyset\}
   \text{ and}\\
C&=\{(\x,z)\in\Omega\times\mathbb T^d\mid\dimh((E(\x)+z)\cap F)\le t\}
\end{align*}
are universally measurable.
\end{lemma}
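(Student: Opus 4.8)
The plan is to express each of the three claimed measurability statements as a consequence of the fact that continuous (or Borel) images of Borel sets are analytic, together with the universal measurability of analytic sets (\cite[Theorem 21.10]{Ke}). First I would handle the claim that $E(\x)$ is analytic for each fixed $\x$. Write $E(\x)=\bigcap_{k\in\N}\bigcup_{n\ge k}(x_n+\Pi(A_n))$. Each $A_n$ is analytic, $\Pi$ is continuous, and translation by $x_n$ is a homeomorphism of $\T^d$, so $x_n+\Pi(A_n)$ is analytic. A countable union of analytic sets is analytic, and since $\bigcup_{n\ge k}(x_n+\Pi(A_n))$ is decreasing in $k$, the intersection $E(\x)$ is analytic as well (analytic sets are closed under countable intersections). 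The only mild subtlety here is that $A_n\subset[-\tfrac12,\tfrac12[^d$ need not be closed, but that is irrelevant: the argument uses only that it is analytic.

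Next I would treat $B$. The key is to exhibit $B$ as a set whose complement is a continuous (or Borel) image of an analytic set, hence analytic, hence universally measurable; a universally measurable set has universally measurable complement. Consider the map $\Phi\colon\Omega\times\T^d\times\T^d\times\N\to\T^d\times\T^d$ sending $(\x,z,a,n)$ (with $a$ ranging over $\widetilde A_n$, i.e.\ over the analytic set $A_n$) to $(x_n+z+\Pi(a),\,x_n+z+\Pi(a))$ — more usefully, I would build the set
\[
S=\{(\x,z,w)\in\Omega\times\T^d\times\T^d\mid w\in (E(\x)+z)\}.
\]
To see $S$ is analytic, first note
\[
S'_n=\{(\x,z,w)\mid w\in x_n+z+\Pi(A_n)\}
\]
is analytic, being the image of $\Omega\times\T^d\times A_n$ under the continuous map $(\x,z,a)\mapsto(\x,z,x_n+z+\Pi(a))$ composed with inclusion; then $\bigcup_{n\ge k}S'_n$ is analytic, and $S=\bigcap_k\bigcup_{n\ge k}S'_n$ is analytic. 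Now
\[
\Omega\times\T^d\setminus B=\{(\x,z)\mid (E(\x)+z)\cap F\ne\emptyset\}
 =\pi_{12}\bigl(S\cap(\Omega\times\T^d\times F)\bigr),
\]
where $\pi_{12}$ is the projection onto the first two coordinates. Since $F$ is analytic and $S$ is analytic, the intersection is analytic, and a projection of an analytic set is analytic. Hence $\Omega\times\T^d\setminus B$ is analytic, therefore universally measurable, therefore so is $B$.

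Finally, for $C$ I would combine the previous construction with a standard descriptive-set-theoretic fact about the Hausdorff dimension: the map $K\mapsto\dimh K$ is Borel measurable on a suitable coding space, and more concretely the set $\{(\x,z)\mid\dimh((E(\x)+z)\cap F)\le t\}$ can be written using the net-measure characterisation of Hausdorff measure. The cleanest route is: fix the generalised dyadic cubes (here ordinary dyadic cubes of $\T^d$), and observe that for an analytic set $G$ one has $\dimh G\le t$ iff for every $\varepsilon>0$ and every $\delta>0$ there is a cover of $G$ by dyadic cubes of generation $\ge$ some level with $\sum(\operatorname{diam})^{t+\varepsilon}<\delta$; quantifying over rational $\varepsilon,\delta$ and over the countable family of candidate covers turns ``$\dimh G\le t$'' into a countable combination of conditions of the form ``$G$ is contained in a given countable union of dyadic cubes'', i.e.\ ``$G$ misses a given closed set''. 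Feeding $G=(E(\x)+z)\cap F$ into this and using that, by the argument for $B$, the set $\{(\x,z)\mid (E(\x)+z)\cap F\subset O\}$ is the complement of a projection of an analytic set — hence co-analytic, hence universally measurable — for each fixed open (or $F_\sigma$) $O$, we get $C$ as a countable union/intersection of universally measurable sets, hence universally measurable. I expect the main obstacle to be precisely this last point: making rigorous that ``$\dimh(\,\cdot\,)\le t$'' is a countable Boolean combination of ``containment in a prescribed $F_\sigma$ set'' conditions in a way that is uniform in the parameter $(\x,z)$, and checking that each such containment condition defines a universally measurable set via the analyticity of $S$ above. Everything else is routine closure properties of the analytic $\sigma$-algebra.
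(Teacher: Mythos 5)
Your treatment of the analyticity of $E(\x)$ and of the universal measurability of $B$ matches the paper's proof essentially verbatim: the paper also forms the auxiliary analytic set $A=\{(\x,z,y)\mid y\in E(\x)+z\}$ (your $S$), writes $B^c=\pi_{12}\bigl(A\cap(\Omega\times\T^d\times F)\bigr)$, and invokes closure of analytic sets under countable unions/intersections, continuous preimages, and projections, plus \cite[Theorem 21.10]{Ke}. The only cosmetic difference is that the paper realises $D_n$ as a preimage $f_n^{-1}(A_n)$ under the continuous map $f_n(\x,z,y)=y-x_n-z$, whereas you describe the same set as an image; both are fine.

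For $C$, however, the sketch you propose has a genuine gap, and it is exactly the one you flag at the end. The characterisation of $\dimh G\le t$ that you want to quantify over is an \emph{existential} statement about \emph{countable} covers by dyadic cubes, and the family of countable subcollections of a countable set is uncountable. So ``$\dimh G\le t$'' is not a countable Boolean combination of conditions of the form ``$G\subset O$''. The obvious repair is to restrict to \emph{finite} subcollections of dyadic cubes, which does give a countable index set, but that only recovers the Hausdorff content of $G$ when $G$ is compact (or at least $\sigma$-compact): for instance, a countable dense $G$ has $\dimh G=0$ yet every finite dyadic cover of $G$ covers $\T^d$. Here $G=(E(\x)+z)\cap F$ is merely analytic, since $E(\x)$ is a countable intersection of countable unions of translated analytic sets and $F$ is analytic, so there is no $\sigma$-compactness to exploit, and the ``countable cover'' route does not close. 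The paper avoids this entirely by citing a theorem of Dellacherie \cite{Del}: if $S,X$ are compact metric and $G\subset S\times X$ is analytic, then $s\mapsto\dimh\bigl(G\cap(\{s\}\times X)\bigr)$ is measurable with respect to the $\sigma$-algebra generated by analytic sets; taking $S=\Omega\times\T^d$, $X=\T^d$ and $G=A\cap(S\times F)$ immediately gives $C=H^{-1}([0,t])$ universally measurable. That theorem rests on Choquet capacitability of analytic sets and is not a formal consequence of the closure properties you have used, so your proof of the $C$ part really does need this (or an equivalent) external input rather than the elementary cover-counting argument.
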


\begin{proof}
The analyticity of $E(\x)$ follows from \cite[Proposition 14.4]{Ke}, which
states that the class of analytic sets is closed under countable unions and
intersections.

Define
\[
A=\{(\x,z,y)\in\Omega\times\T^d\times\T^d\mid y\in E(\x)+z\}.
\]
To prove that $A$ is analytic, it is enough to show that
\[
D_n=\{(\x,z,y)\in\Omega\times\T^d\times\T^d\mid y\in x_n+ A_n+z\}
\]
is analytic for all $n\in\N$. Define
$f_n\colon\Omega\times\T^d\times\T^d\to\T^d$ by $f_n(\x,z,y)=y-x_n-z$. Note that
$D_n=f_n^{-1}(A_n)$. Since $f_n$ is continuous and $A_n$ is analytic, $D_n$ is
analytic by \cite[Proposition 14.4]{Ke}. Let
$\pi_{12}\colon\Omega\times\T^d\times\T^d\to\Omega\times\T^d$
be the projection $\pi_{12}(\x,z,y)=(\x,z)$. The observation that the complement
of $B$ is $B^c=\pi_{12}\bigl(A\cap(\Omega\times\T^d\times F)\bigr)$
implies that $B^c$ is analytic and, thus, $B$ belongs to the $\sigma$-algebra
generated by analytic sets. Therefore, $B$ is universally measurable by
\cite[Theorem 21.10]{Ke}.

In \cite{Del}, it is shown that if $S$ and $X$ are compact metric spaces and
$G\subset S\times X$ is analytic, the map $H(s)=\dimh(G\cap(\{s\}\times X)$ is
measurable with respect to the $\sigma$-algebra generated by analytic sets.
Letting $S=\Omega\times\T^d$, $X=\T^d$ and $G=A\cap(S\times F)$, we have that
$C=H^{-1}([0,t])$. Therefore, the analyticity of $A$ and $F$ implies that $C$
is universally measurable.
\end{proof}

The following theorem is a counterpart of the upper bound parts of
Theorems \ref{hittingcovering} and \ref{intersectionestimate}.

\begin{theorem}\label{upper}
Let $F\subset\mathbb T^d$ be analytic. Then for $\P$-almost all $\x\in\Omega$,
\begin{align*}
&E(\x)\cap F=\emptyset\text{ if }\dimp F<d-s_0\text{ and}\\
&\dimh(E(\x)\cap F)\le s_0+\dimp F-d\text{ if }\dimp F\ge d-s_0,
\end{align*}
where $s_0$ is the $\P$-almost sure Hausdorff dimension of $E(\x)$.
\end{theorem}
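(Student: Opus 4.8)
The plan is to prove both assertions by a single first-moment (covering) argument, essentially mimicking the proof of \eqref{packingsmall} in Theorem~\ref{hittingcovering} but working with the translated sets $\Pi(A_n)$ rather than balls. As in that proof, I would first reduce to box dimension: since $\dimp F=\inf\{\sup_n\dimbu F_n\mid F\subset\bigcup_n F_n\}$ (Tricot's theorem, already invoked above) and $E(\x)$ meets $F$ iff it meets one of the pieces $F_n$, it suffices to treat the case where $\dimbu F<d-s_0$ for the first statement, and to bound $\dimh(E(\x)\cap F)$ on each piece for the second. So fix the hypothesis on $\dimbu F$ and choose exponents $\gamma,\beta$ with $\dimbu F<\gamma$, $s_0<\beta$ and (for the first statement) $\gamma+\beta<d$.

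Next I would dyadically decompose the scales. The key geometric input is that a translate $x_n+\Pi(A_n)$ can only meet a given dyadic cube $Q$ of side $2^{-m}$ if $x_n$ lies in the $2^{-m}$-neighbourhood of $-\Pi(A_n)+Q$, an event of $\L$-probability at most $C\,\L_{2^{-m}}(A_n)$, where $\L_{2^{-m}}(A_n)$ denotes the Lebesgue measure of the $2^{-m}$-neighbourhood of $A_n$; by the box-counting definition of $s_0$ via \cite[Theorem~1.1]{FJJS} one has $\L_{2^{-m}}(A_n)\lesssim 2^{-m(d-s_0)}$ up to an arbitrarily small loss in the exponent, i.e.\ $\le 2^{-m(d-\beta)}$ for $m$ large, uniformly in $n$. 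Meanwhile $F$ is covered by at most $2^{m\gamma}$ cubes of side $2^{-m}$ once $m$ is large. Summing over these cubes, over $n$ with $2^{-m}\le\mathrm{diam}(A_n)$ comparable to the relevant scale, and over $m$, the expected number of incidences at scale $m$ is at most (number of $F$-cubes) $\times$ (number of relevant $n$) $\times\,2^{-m(d-\beta)}$. Here the honest point is that one should instead group the indices $n$ by the scale $2^{-k}$ at which $\Pi(A_n)$ ``lives'' and run Borel--Cantelli scale by scale, exactly as the $\mathcal N_k$, $n_k$ bookkeeping in Theorem~\ref{hittingcovering}: the almost sure value $s_0$ from \cite{FJJS} is precisely what controls $\sum_n \L_{2^{-k}}(A_n)$ against $2^{-k(d-s_0)}\cdot 2^{ks_0}$-type sums. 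For the first statement, $\gamma+\beta<d$ makes the series over $m$ geometric and summable, so Borel--Cantelli gives that $\P$-almost surely $F$ is disjoint from $x_n+\Pi(A_n)$ for all large $n$, hence from $E(\x)$. For the second statement, one instead fixes $s>s_0+\dimp F-d$, chooses $\gamma,\beta$ with $\gamma+\beta-d<s$, and observes that the expected number of scale-$2^{-m}$ cubes meeting both $F$ and $\bigcup_{n\ge k}(x_n+\Pi(A_n))$ is $\lesssim 2^{m(\gamma+\beta-d)}$, so that $\bigcup_{n\ge k}(x_n+\Pi(A_n))\cap F$ can be covered, for infinitely many $m$, by that many cubes of side $2^{-m}$; this yields $\dimh(E(\x)\cap F)\le\gamma+\beta-d$ almost surely, and letting $\gamma\downarrow\dimbu F$, $\beta\downarrow s_0$, and passing to the countable union over the pieces $F_n$ gives the bound $s_0+\dimp F-d$.

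The measurability subtleties are already dispatched by Lemma~\ref{measurability} (applied with $z=0$), so the event $\{E(\x)\cap F=\emptyset\}$ and the sets $\{\dimh(E(\x)\cap F)\le t\}$ are legitimately measurable and the almost-sure statements make sense. The main obstacle, and the place I would be most careful, is the uniform volume estimate $\L_{2^{-k}}(A_n)\lesssim 2^{-k(d-s_0)+k\varepsilon}$: the sets $A_n$ are arbitrary analytic sets with no individual regularity, so this cannot hold cube-by-cube but only in the aggregated, Borel--Cantelli-friendly form that \cite[Theorem~1.1]{FJJS} is built to provide — one must quote their characterisation of $s_0$ in the precise shape $\sum_{n}\L_{\mathrm{diam}}(\text{neighbourhoods})$-convergence/divergence and feed exactly that into the scale-by-scale counting. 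Once that input is in hand, the rest is the same bookkeeping as in Theorem~\ref{hittingcovering}, and I would simply write ``the details are as in the proof of Theorem~\ref{hittingcovering}, using \cite[Theorem~1.1]{FJJS} in place of the Ahlfors regularity of $\mu$.''
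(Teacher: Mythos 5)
There is a genuine gap, and it sits exactly where you flagged it: the volume estimate $\L\bigl(A_n(2^{-m})\bigr)\lesssim 2^{-m(d-s_0)+m\varepsilon}$ for the $2^{-m}$-neighbourhoods is false for general analytic $A_n$, and no ``aggregated'' version of it follows from the characterisation of $s_0$. The point is that $s_0$ is a Hausdorff-type quantity (in \cite{FJJS} it is expressed through Hausdorff contents of the $A_n$; for the rectangles of Section \ref{parallel} through the singular value function), whereas the hitting probability $\P(\{x_n+\Pi(A_n)\cap Q\neq\emptyset\})=\L(Q-\Pi(A_n))$ is a Minkowski/neighbourhood-volume quantity, and the two are incomparable. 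Concretely, replace each $A_n$ by $A_n\cup D_n$, where $D_n$ is a finite $\delta_n$-net of $U(0,\tfrac 14)$ with $\delta_n\to 0$ rapidly. Each $\bigcup_{n\ge k}(x_n+\Pi(D_n))$ is countable, so $\limsup_{n\to\infty}(x_n+\Pi(D_n))$ has Hausdorff dimension $0$ and $s_0$ is unchanged; but now $\L\bigl((A_n\cup D_n)(2^{-m})\bigr)\ge c>0$ for all $n$ with $\delta_n\le 2^{-m}$, so at every scale the first-moment sum over the relevant indices diverges and Borel--Cantelli gives nothing. A covering/first-moment argument can only ever produce an avoidance threshold governed by an upper-box-type size of the generating sets; this matches $d-s_0$ for balls (which is why it works in Theorem \ref{hittingcovering}), but not for general analytic $A_n$.

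The paper's proof goes along entirely different lines. It uses the product inequality $\dimh(E(\x)\times F)\le\dimh E(\x)+\dimp F$ together with Mattila's slicing results (\cite[(13.2) and Theorem 13.12]{M95}) to conclude that, for $\P$-almost every $\x$, the translate $(E(\x)+z)\cap F$ is empty (respectively has $\dimh\le s_0+\dimp F-d$) for $\L$-almost every $z$; then Lemma \ref{measurability} and Fubini's theorem swap the quantifiers to produce a single good $z_0$, and the identity $E(\x)+z_0=E(\x+\z)$ with $\z=(z_0,z_0,\dots)$ together with the translation invariance of $\P$ removes the translate. This last step is where the full strength of Lemma \ref{measurability} (joint universal measurability in $(\x,z)$, not just the case $z=0$) is actually needed.
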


\begin{proof}
 It is known that (cf. \cite{Tricot82} or \cite[Theorem 8.10]{M95})
\begin{equation}\label{eq12}
\dimh (E(\x)\times F)\le\dimh E(\x)+\dimp F
\end{equation}
for all $\x\in\Omega$. In \cite[Theorem 8.10]{M95}, the result is stated only
for Borel sets but the part of the proof where inequality \eqref{eq12} is
proven is valid for all sets. Lifting $E(\x)$ and $F$ to $\R^d$ as
$\widetilde E(\x$) and $\widetilde F$, we may apply
\cite[(13.2)]{M95}, which implies that for all $\x\in\Omega$ and $z\in\R^d$,
\begin{equation}\label{slice}
(\widetilde E(\x)+z)\cap\widetilde F=\pi_1((\widetilde E(\x)\times\widetilde F)
  \cap V_z),
\end{equation}
where $V_z=\{(u,v)\in\mathbb R^d\times\mathbb R^d\mid u-v=z\}$ and
$\pi_1(u,v)=u$.

We study first the case $\dimp F<d-s_0$. From \eqref{eq12} we deduce that the
inequality $\dimh(E(\x)\times F)<d$ holds for $\P$-almost all
$\x\in\Omega$. Therefore, since the dimension will not increase under the
projection onto the orthogonal complement of $V_0$ (see
\cite[Theorem 7.5]{M95}),
$(\widetilde E(\x)\times\widetilde F)\cap V_z=\emptyset$ for $\L$-almost all
$z\in\R^d$. Projecting the sets back to $\T^d$, we have by \eqref{slice} that,
for $\P$-almost all $\x\in\Omega$, $(E(\x)+z)\cap F=\emptyset$ for $\L$-almost
all $z\in\T^d$. By virtue of Lemma~\ref{measurability}, the set
\[
B=\{(\x,z)\in\Omega\times\T^d\mid (E(\x)+z)\cap F=\emptyset\}
\]
is universally measurable. Thus, Fubini's theorem
implies that for $\L$-almost all $z\in\T^d$, $(E(\x)+z)\cap F=\emptyset$
for $\P$-almost all $\x\in\Omega$. In particular, there exists $z_0\in\T^d$ with
\[
\P(\{\x\in\Omega\mid (E(\x)+z_0)\cap F=\emptyset\})=1.
\]
Set $\z=(z_i)_{i\in\N}$, where $z_i=z_0$ for all $i\in\N$. Since
$E(\x)+z_0=E(\x+\z)$ and $\P$ is translation invariant, we have
\[
\P(\{\x\in\Omega\mid E(\x)\cap F=\emptyset\})=1.
\]

Now we consider the case $\dimp F\ge d-s_0$. As at the beginning
of this section, observe that
$\{\x\in\Omega\mid\dimh(E(\x)\times F)\le s\}$ is a tail event
for all $0\le s\le 2d$ and, by Kolmogorov's zero-one law, there exists
$t_0\in[0,2d]$ such that $\dimh(E(\x)\times F)=t_0$ for $\P$-almost all
$\x\in\Omega$. If $t_0<d$, the above argument implies that
$E(\x)\cap F=\emptyset$ for $\P$-almost all $\x\in\Omega$ and, thus, the second
claim in the statement of theorem is true. If $t_0\ge d$, we may apply
\cite[Theorem 13.12]{M95} which implies that, for $\P$-almost all
$\x\in\Omega$,
\[
\dimh\bigl((E(\x)+z)\cap F\bigr)\le\dimh(E(\x)\times F)-d
\]
for $\L$-almost all $z\in\T^d$. Observe that \cite[Theorem 13.12]{M95}
is stated only for Borel sets but that assumption is not used in the proof.
Furthermore, by \eqref{eq12}, Lemma~\ref{measurability} and Fubini's theorem,
for $\L$-almost all $z\in\T^d$,
\[
\dimh\bigl((E(\x)+z)\cap F\bigr)\le s_0+\dimp F-d
\]
for $\P$-almost all $\x\in\Omega$. Now the claim follows as above from the
translation invariance of $\P$.
\end{proof}

\subsection{A counter example for non-trivial lower bounds}\label{parallel}

In this subsection, we give an application of our results to a specific class
of affine random covering sets in $\T^d$ which motivated this work. This class
also demonstrates why the analogues of the lower bounds given by
Theorems \ref{hittingcovering} and \ref{intersectionestimate} are not true in
the setting of general generating sets $A_n$, $n\in\N$.

Let $(r_n)_{n\in\N}$ be a decreasing sequence of numbers between 0
and 1 tending to zero. Further, let
$0<H_d\le H_{d-1}\le\cdots\le H_2\le H_1=1$. We consider the covering set
\[
E(\x)=\limsup_{n\to\infty}(x_n+\Pi(A_n))
\]
for $\x\in\Omega$, where
\begin{equation}\label{eq:sidelengths}
A_n=\prod_{i=1}^d[-\tfrac 12,-\tfrac 12+(r_n)^{(H_i)^{-1}}]\subset\R^d
\end{equation}
are rectangles in $\R^d$ with sides parallel to the coordinate axes and
side lengths given by \eqref{eq:sidelengths}.

Let $s\in [0,d]$. We denote the integer and fractional parts of
$s$ by $\lfloor s\rfloor$ and $\{s\}$, respectively. For all $n\in\N$, we
have
\[
\Phi^s(A_n)=r_n^{\sum_{i=1}^{\lfloor s\rfloor}(H_i)^{-1}+\{s\}
  (H_{\lfloor s\rfloor+1})^{-1}},
\]
where $\Phi^s$ is the singular value function of a rectangle determined by its
side lengths (see \cite{JJKLS}). Let
$k_0=\max\{k\in\{1,\dots,d\}\mid\sum_{i=1}^k(H_i)^{-1}\le\alpha\}$ and
$\alpha=\min\{d,\limsup_{n\to\infty}\frac{\log n}{-\log r_n}\}$.
By \cite[Theorem 2.1]{JJKLS},
\[
\dimh E(\x)=s_0=\min\Bigl\{d,\inf\bigl\{s\ge 0\mid\sum_{n=1}^\infty\Phi^s(A_n)
  <\infty\bigr\}\Bigr\}.
\]
Combining this with the second equality in Theorem~\ref{dimcoveringset}, we
conclude that $\lfloor s_0\rfloor=k_0$ and
$\{s_0\}=H_{k_0+1}(\alpha-\sum_{i=1}^{k_0}(H_i)^{-1})$. Therefore, for $\P$-almost
all $\x\in\Omega$,
\[
\dimh E(\x)=s_0=\min\bigl\{d,\alpha H_{k_0+1}
 +\sum_{i=1}^{k_0}\bigl(1-\frac{H_{k_0+1}}{H_i}\bigr)\bigr\}.
\]
Thus, from Theorem \ref{upper}, we conclude that almost surely,
\begin{equation}\label{yy}
\begin{split}
&E(\x)\cap F=\emptyset\text{ if }\dimp F<d-s_0\text{ and}\\
&\dimh(E(\x)\cap F)\le s_0+\dimp F-d\text{ if }\dimp F\ge d-s_0.
\end{split}
\end{equation}

The following example shows that we cannot have similar lower
bounds in Theorem~\ref{upper} as in Theorems~\ref{hittingcovering} and
\ref{intersectionestimate}.

\begin{example}\label{bad case}
Fix $0<\varepsilon<1$. Consider $A_n\subset\R^2$, $n\in\N$, as in
\eqref{eq:sidelengths}, where $H_2=\frac\varepsilon{1+\varepsilon}$ and
$r_n=n^{-\varepsilon}$ for all $n\in\N$. Then $\alpha=\varepsilon^{-1}$ and, thus,
the $\P$-almost sure value of the Hausdorff dimension of the covering set is
$s_0=1+\frac{1-\varepsilon}{1+\varepsilon}$. If $-\frac 12\le b<\frac 12$ and
$F=\Pi([-\frac 12,\frac 12]\times \{b\})$, then
$\dimh F=\dimp F=1>2-s_0=\frac{2\varepsilon}{1+\varepsilon}$. However,
\[
\mathbb{P}(\{\x\in\Omega\mid E(\x)\cap F\neq\emptyset\})=0.
\]
Indeed, $E(\x)\cap F\neq\emptyset$ only if
$\Pi(0,b)\in\limsup_{n\to\infty} \Pi(\pi_2(\widetilde{x_n}+A_n))$, where
$\pi_2$ denotes the orthogonal projection onto the $y$-axis. Now
the law of the set $\limsup_{n\to\infty} \Pi(\pi_2(\widetilde{x_n}+A_n))$
is that of a random covering set in $\T^1$ with
$r_n=n^{-\varepsilon(H_2)^{-1}}=n^{-1-\varepsilon}$. Since
$\sum_{n=1}^\infty n^{-1-\varepsilon}<\infty$, the Borel-Cantelli lemma implies
\[
\P\bigl(\Pi(0,b)\in\limsup_{n\to\infty} \Pi(\pi_2(\widetilde{x_n}+A_n))\bigr)=0.
\]
\end{example}

We will next show how the results from Section \ref{sec:hitting_metric} may be
used to replace \eqref{yy} by a sharper estimate and also to get an analogy of
the lower bounds. To that end, we define a new metric $\kappa$ on $\T^d$ by
`snowflaking' the Euclidean distance by factor $H_i$ in each coordinate
direction. More precisely, for all $y,z\in\T^d$, we set
\[
\kappa(z,y)=\max_{1\le i\le d}2^{H_i}|z_i-y_i|^{H_i},
\]
where the natural distance between points $a,b\in\T^1$ is denoted
by $|a-b|$. With this metric, $\T^d$ becomes a $t$-regular metric space with
$t=\sum_{i=1}^d (H_i)^{-1}$ and $\mathcal{L}$ is the $t$-regular measure
satisfying \eqref{eq:d-regular}. Further, in this metric each $\Pi(A_n)$ is a
ball of radius $r_n$. The constants $2^{H_i}$ appear in the definition of
$\kappa$ to ensure this. Thus, we are in a situation where the results from
Section~\ref{sec:hitting_metric} may be applied. For instance, for $\P$-almost
all $\x\in\Omega$, we have
\[
\dimhk E(\x)=\alpha=\min\Bigl\{t,\limsup_{n\to\infty}
\frac{\log n}{-\log r_n}\Bigr\},
\]
where the Hausdorff and packing dimensions with respect to the metric $\kappa$
are denoted by $\dimhk$ and $\dimpk$, respectively. Further, if $F\subset\T^d$
is analytic, then for $\P$-almost all $\x\in\Omega$,
\begin{equation}\label{stronger}
\begin{split}
&E(\x)\cap F=\emptyset\text{ if }\dimpk F<t-\alpha,\\
&E(\x)\cap F\neq\emptyset\text{ if }\dimhk F>t-\alpha,\\
&E(\x)\cap F\neq\emptyset\text{ if }\dimpk F>t-\alpha
   \text{ and \eqref{conditionC} holds, and}\\
&\alpha+\dimhk F-t\le \dimhk(E(\x)\cap F)\le\alpha+\dimpk F-t\text{ if }
   \dimpk F>t-\alpha.
\end{split}
\end{equation}

\begin{remark}
a) In Example \ref{bad case}, we have $\dimpk F=\dimhk F=1$,
$t=(1+2\varepsilon)\varepsilon^{-1}$ and $\alpha=\varepsilon^{-1}$, implying
$1=\dimpk F<t-\alpha=2$ (which is consistent with \eqref{stronger}).

b) Example~\ref{bad case} shows that there cannot be any non-trivial lower
bound in \eqref{yy} depending only on the Hausdorff and packing dimensions of
$E(\x)$ and $F$. However, \eqref{stronger} demonstrates that there can be some
other quantities like $\dimhk$ and $\dimpk$ which do imply non-trivial lower
bounds.
\end{remark}

\subsection{Covering sets involving random rotations}

As demonstrated in Example \ref{bad case}, the analogues of
\eqref{hausdorfflarge} and \eqref{packinglarge}, and the lower bound in
Theorem~\ref{intersectionestimate} are not always true for a sequence of
general sets $(A_n)_{n\in\N}$. In Example \ref{bad case}, this is due to the
alignment of the rectangles $A_n$, $n\in\N$. In this subsection, we consider a
slightly modified and, perhaps, more natural version of random covering sets in
$\T^d$, where the sets $A_n$ are also rotated, and show that under a standard
additional assumption on the dimensions, the analogues of
\eqref{hausdorfflarge} and Theorem~\ref{intersectionestimate} remain valid for
any sequence of analytic generating sets $A_n$.

Let $(\widetilde\Omega,\widetilde{\mathcal A},\widetilde\P)$ be the completion
of the infinite product of
$(\T^d\times\mathcal O(d),\mathcal B(\T^d\times\mathcal O(d)),\L\times\theta)$,
where $\mathcal O(d)$ is the orthogonal group on $\R^d$ and $\theta$ is
the Haar measure on $\mathcal O(d)$. Assume that
$A_n\subset U(0,\frac 12)\subset\R^d$ are analytic for all $n\in\N$, where
$U(x,r)$ is the open ball with radius $r$ centred at $x$. We
consider the covering sets
\begin{equation*}
E(\x,\h)=\limsup_{n\to\infty}\bigl(x_n+\Pi(h_n(A_n))\bigr)\subset\T^d\,,
\end{equation*}
for $(\x,\h)\in\widetilde{\Omega}$.

\begin{remark}\label{rotation}
a) Since $A_n\subset U(0,\frac 12)$, the restriction of $\Pi$ to $h_n(A_n)$ is
injective for all $h_n\in\mathcal O(d)$ and $n\in\N$.

b) As above, Kolmogorov's zero-one law implies the existence of $s_0^R\in[0,d]$
such that $\dimh E(\x,\h)=s_0^R$ for $\widetilde\P$-almost all
$(\x,\h)\in\widetilde\Omega$. Note that the value $s_0$ for the Hausdorff
dimension of typical random covering sets calculated in \cite{FJJS} depends only
on the shapes of the generating sets $A_n$, that is, the value of $s_0$ for the
generating sequence $(A_n)_{n\in\N}$ is equal to that of the sequence
$(h_n(A_n))_{n\in\N}$ for all $\h\in(\mathcal O(d))^\N$.
Thus, for this large class of sets, we have $s_0^R=s_0$, that is, adding the
rotations will not change the dimension of typical random covering sets.

c) Notice that Theorem \ref{upper} holds for $E(\x,\h)$ too, as the proof
is valid for any fixed sequence of rotations $(h_n)_{n\in\N}$.
\end{remark}

Before proving our next main theorem, which gives a lower 
bound for typical intersections, we
prove two lemmas. The first one states that the dimension of a typical
covering set is the same inside every ball.

\begin{lemma}\label{invariance}
For $\widetilde\P$-almost all $(\x,\h)\in\widetilde\Omega$,
\[
\dimh\bigl(E(\x,\h)\cap B(z,r)\bigr)=s_0^R
\]
for all $z\in\T^d$ and $r>0$.
\end{lemma}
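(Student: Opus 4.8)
The plan is to reduce the statement to a countable collection of events and use the translation and rotation invariance of $\widetilde\P$ together with the known almost sure value $s_0^R$. First, observe that the upper bound $\dimh(E(\x,\h)\cap B(z,r))\le\dimh E(\x,\h)=s_0^R$ is immediate for $\widetilde\P$-almost all $(\x,\h)$ and all $z,r$, since $E(\x,\h)\cap B(z,r)\subset E(\x,\h)$. So the content is the lower bound. The key reduction is that it suffices to prove, for each fixed dyadic-type cube $Q\in\mathcal Q$ (the generalised dyadic cubes of Section~\ref{metric} applied to $\T^d$ with its Euclidean metric), that
\[
\dimh\bigl(E(\x,\h)\cap Q\bigr)=s_0^R\quad\text{for }\widetilde\P\text{-almost all }(\x,\h),
\]
because any ball $B(z,r)$ with $r>0$ contains some $Q\in\mathcal Q$, and $\mathcal Q$ is countable, so we may intersect the corresponding full-measure sets. (One must also check the set $\{(\x,\h)\mid\dimh(E(\x,\h)\cap Q)\ge s\}$ is $\widetilde\P$-measurable; this follows from a measurability statement in the spirit of Lemma~\ref{measurability}, applied with the analytic set $F=Q$.)

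Next I would fix $Q=Q_{N,i}\in\mathcal Q_N$ and exploit the self-similar structure of the covering construction. Writing $x_n=c+y_n$ where $c$ is the centre of $Q$, and restricting attention to those indices $n$ with $r_n$ small enough that $x_n+\Pi(h_n(A_n))\subset Q$ whenever $x_n\in Q$, one sees that $E(\x,\h)\cap Q$ contains a ``scaled copy'' of a covering set built from the tail of the sequence $(A_n)$, with the centres $y_n$ conditioned (through a positive-probability event) to land in a fixed sub-ball of $Q$. More cleanly: let $\mathcal H_Q$ be the event $\{(\x,\h)\mid\dimh(E(\x,\h)\cap Q)\ge s_0^R\}$. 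This is a tail event (changing finitely many coordinates $(x_n,h_n)$ changes $E(\x,\h)$ only by a set of isolated points in the limsup, hence does not affect $\dimh(E(\x,\h)\cap Q)$, since removing finitely many balls from the limsup does not change it). By Kolmogorov's zero--one law, $\widetilde\P(\mathcal H_Q)\in\{0,1\}$. To rule out probability $0$, note that on the event that infinitely many $x_n$ (with $n$ large) fall in $Q$ — which has probability one, again by Borel--Cantelli since $\sum_n\L(Q)=\infty$ — the set $E(\x,\h)\cap Q$ is, after the affine map sending $Q$ to $\T^d$, stochastically at least as large as a full random covering set of the same type $E(\x',\h')$; hence $\dimh(E(\x,\h)\cap Q)\ge s_0^R$ with positive probability, forcing $\widetilde\P(\mathcal H_Q)=1$.

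The main obstacle I anticipate is making the ``scaled copy'' comparison rigorous when the generating sets $A_n$ are arbitrary analytic sets rather than balls: one cannot simply rescale $Q$ to $\T^d$ because the images $h_n(A_n)$ do not rescale to the same family. The clean way around this is \emph{not} to rescale but to argue directly that, conditionally on the positions $\{n:x_n\in\frac12 Q\}$ (an infinite set a.s.), the restriction to $Q$ of $\limsup_n(x_n+\Pi(h_n(A_n)))$ has the same law, up to a fixed bi-Lipschitz change of coordinates, as $E(\x,\h)$ itself localised — and then invoke that the \emph{value} $s_0^R$ is intrinsic and, by Remark~\ref{rotation}(b), depends only on the shapes $(A_n)$, which are unchanged. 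Thus the dimension inside $Q$ cannot drop below $s_0^R$. Combined with the trivial upper bound and the countable intersection over $\mathcal Q$, this yields the claim for all balls $B(z,r)$ simultaneously, $\widetilde\P$-almost surely.
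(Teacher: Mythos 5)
Your reduction to countably many sets, the measurability remark, and the observation that $\{(\x,\h)\mid\dimh(E(\x,\h)\cap Q)\ge s\}$ is a tail event are all fine. The gap is in the step that identifies the almost sure constant with $s_0^R$. The claim that, after mapping $Q$ affinely onto $\T^d$, the set $E(\x,\h)\cap Q$ is ``stochastically at least as large as a full random covering set of the same type'' is not justified and is false as stated: the affine map rescales the generating sets to $2^{N}h_n(A_n)$ and only the (random) subsequence of indices with $x_n$ near $Q$ survives, so what you obtain is a covering set generated by a \emph{different} sequence of sets along a subsequence, and there is no reason its dimension should again be $s_0^R$ (for general analytic $A_n$ there is not even a formula for $s_0^R$ to compare against; moreover the diameters of the $A_n$ need not shrink, so one cannot even arrange $x_n+\Pi(h_n(A_n))\subset Q$). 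Your fallback --- that the conditioned restriction ``has the same law, up to a fixed bi-Lipschitz change of coordinates, as $E(\x,\h)$ itself localised'' --- is circular, since ``$E(\x,\h)$ localised'' to a small set is precisely the object whose dimension you are trying to bound from below, and Remark~\ref{rotation}(b) concerns only the global dimension for the special class of \cite{FJJS}, not arbitrary analytic $A_n$ and not local dimensions.

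The missing idea is much softer and requires no self-similarity or conditioning. By translation invariance of $\L$ (hence of $\widetilde\P$), the law of $\dimh\bigl(E(\x,\h)\cap B(z,r)\bigr)$ does not depend on $z$, because $E(\x+\z,\h)=E(\x,\h)+z$ for the constant sequence $\z=(z,z,\dots)$. Combined with the zero--one law this gives a single constant $t_r$ valid for every centre $z$. Now cover $\T^d$ by finitely many balls $B(z_1,r),\dots,B(z_M,r)$; since the Hausdorff dimension of a finite union is the maximum of the dimensions, almost surely
\[
s_0^R=\dimh E(\x,\h)=\max_{1\le i\le M}\dimh\bigl(E(\x,\h)\cap B(z_i,r)\bigr)=t_r,
\]
which is exactly the paper's argument. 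Your route would still need the translation-invariance input to have any chance, and the rescaling comparison does not go through for general generating sets.
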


\begin{proof}
For all $z\in\T^d$, let $\z\in(\T^d)^\N$ be the sequence such that
$z_n=z$ for all $n\in\N$. Since $\L$ is translation invariant, we have for
all $t\in[0,d]$ and $z\in\T^d$ that
\begin{equation}\label{transinv}
\begin{split}
&\widetilde\P\bigl(\{(\x,\h)\in\widetilde\Omega\mid\dimh\bigl(E(\x,\h)\cap
  B(z,r)\bigr)=t\}\bigr)\\
&=\widetilde\P\bigl(\{(\x,\h)\in\widetilde\Omega\mid\dimh\bigl(E(\x+\z,\h)
  \cap B(z,r)\bigr)=t\}\bigr)\\
&=\widetilde\P\bigl(\{(\x,\h)\in\widetilde\Omega\mid\dimh\bigl(E(\x,\h)\cap
  B(0,r)\bigr)=t\}\bigr).
\end{split}
\end{equation}
As above, we see that, for every $r>0$, there exists a unique $t_r\in [0,d]$
such that
$\P\bigl(\bigl\{\x\in\Omega\mid\dimh\bigl(E(\x,\h)\cap B(z,r)\bigr)
  =t_r\bigr\}\bigl)=1$.
Since $\T^d$ may be covered by a finite number of balls with radius $r$,
\eqref{transinv} implies that $t_r=s_0^R$ for all $r>0$.
\end{proof}

Unlike the translation, the rotation is not well defined on $\T^d$. To deal
with the technical problems caused by this fact, we need the following lemma.

\begin{lemma}\label{reducesmall}
Let $0<r<\frac 12$. There exist $M\in\N$ and $z_i\in U(0,\frac 12)\subset\R^d$,
$i=1,\dots,M$, such that, for all $n\in\N$, we may decompose
$A_n=\bigcup_{i=1}^M A_n^i$ into Borel sets $A_n^i$ in such a way that
$A_n^i\subset B(z_i,r)$ for all
$n\in\N$ and, for all $(\x,\h)\in\widetilde\Omega$, there exists
$i\in\{1,\dots,M\}$ such that
\[
\dimh E(\x,\h)=\dimh\Bigl(\limsup_{n\to\infty}\bigl(x_n+\Pi(h_n(A_n^i))\bigr)
  \Bigr).
\]
Further, for all $i\in\{1,\dots,M\}$, there exist sets
$\widehat A_n^i\subset B(0,r)$, $n\in\N$, and a bijection
$F_i\colon\widetilde\Omega\to\widetilde\Omega$ preserving $\widetilde\P$,
that is, $(F_i)_*\widetilde\P=\widetilde\P$, such that
\[
\limsup_{n\to\infty}\bigl(x_n+\Pi(h_n(A_n^i))\bigr)=\limsup_{n\to\infty}\Bigl(\bigl(
  \pi_1(F_i(\x,\h))\bigr)_n+\Pi\bigl(\bigl(\pi_2(F_i(\x,\h))\bigr)_n
  (\widehat A_n^i)\bigr)\Bigr)
\]
for all $(\x,\h)\in\widetilde\Omega$, where $\pi_1(\x,\h)=\x$ and
$\pi_2(\x,\h)=\h$.
\end{lemma}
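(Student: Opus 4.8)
The plan is to exploit compactness of $\T^d$ together with the fact that each $A_n$ lives in $U(0,\tfrac12)$ to chop the generating sets into finitely many pieces of small diameter, and then to identify each piece (under a fixed rotation) with a set sitting in a canonical ball $B(0,r)$ by translating its centre back to the origin; the bijection $F_i$ on $\widetilde\Omega$ will record the compensating translation of $x_n$ and the (unchanged) rotation. First I would fix $r\in(0,\tfrac12)$ and choose, once and for all, a finite cover of the closed ball $\overline{U(0,\tfrac12)}\subset\R^d$ by balls $B(z_1,r),\dots,B(z_M,r)$ with $z_i\in U(0,\tfrac12)$; this is possible since $\overline{U(0,\tfrac12)}$ is compact. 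Given this cover, for each $n$ I decompose $A_n=\bigcup_{i=1}^M A_n^i$ by a standard disjointification: set $A_n^1=A_n\cap B(z_1,r)$ and $A_n^i=\bigl(A_n\cap B(z_i,r)\bigr)\setminus\bigcup_{j<i}B(z_j,r)$, so the $A_n^i$ are Borel, pairwise disjoint, and $A_n^i\subset B(z_i,r)$.

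\medskip

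Next I would verify the dimension statement $\dimh E(\x,\h)=\max_i\dimh\bigl(\limsup_{n\to\infty}(x_n+\Pi(h_n(A_n^i)))\bigr)$. Since $h_n$ is a linear isometry of $\R^d$, we have $h_n(A_n)=\bigcup_{i=1}^M h_n(A_n^i)$, hence $x_n+\Pi(h_n(A_n))=\bigcup_{i=1}^M\bigl(x_n+\Pi(h_n(A_n^i))\bigr)$; a finite union commutes with $\limsup_{n\to\infty}$ in the sense that $\limsup_n\bigcup_{i=1}^M Z_n^i$ is contained in $\bigcup_{i=1}^M\limsup_n Z_n^i$, and it contains each $\limsup_n Z_n^i$. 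Therefore $E(\x,\h)$ is sandwiched between $\bigcup_{i=1}^M\limsup_n(x_n+\Pi(h_n(A_n^i)))$ and each of its $M$ pieces, and since Hausdorff dimension is finitely stable, $\dimh E(\x,\h)=\max_{1\le i\le M}\dimh\bigl(\limsup_n(x_n+\Pi(h_n(A_n^i)))\bigr)$; in particular there is an $i$ (depending on $(\x,\h)$) attaining the maximum, which gives the displayed equality.

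\medskip

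Then I would build, for each fixed $i$, the set $\widehat A_n^i$ and the bijection $F_i$. Put $\widehat A_n^i=A_n^i-z_i\subset B(0,r)$. For a rotation $h_n\in\mathcal O(d)$ we have $h_n(A_n^i)=h_n(\widehat A_n^i)+h_n(z_i)$, so, choosing any lift and using that $\Pi$ is a homomorphism, $x_n+\Pi(h_n(A_n^i))=\bigl(x_n+\Pi(h_n(z_i))\bigr)+\Pi\bigl(h_n(\widehat A_n^i)\bigr)$. This suggests defining $F_i(\x,\h)=(\x',\h)$ where $x_n'=x_n+\Pi(h_n(z_i))$ and the rotations are left untouched; then $\pi_2(F_i(\x,\h))_n=h_n$ acts on $\widehat A_n^i$, and the right-hand side of the desired identity is exactly $\limsup_n\bigl((x_n')_n+\Pi(h_n(\widehat A_n^i))\bigr)=\limsup_n\bigl(x_n+\Pi(h_n(A_n^i))\bigr)$, as required. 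It remains to check that $F_i$ is a $\widetilde\P$-preserving bijection: its inverse is $(\x,\h)\mapsto(\x'',\h)$ with $x_n''=x_n-\Pi(h_n(z_i))$, and on each coordinate factor $\T^d\times\mathcal O(d)$ the map $(x,h)\mapsto(x+\Pi(h(z_i)),h)$ is a homeomorphism which, for every fixed $h$, is a translation of the $\T^d$-factor; since $\L$ is translation invariant and $\theta$ is untouched, $(x,h)\mapsto(x+\Pi(h(z_i)),h)$ preserves $\L\times\theta$ (apply Fubini in the $h$ variable), and hence the countable product $F_i$ preserves $\widetilde\P$.

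\medskip

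\textbf{Main obstacle.} The genuinely delicate point is the measurability/well-definedness of the translation $x\mapsto x+\Pi(h(z_i))$ as $h$ ranges over $\mathcal O(d)$: because rotation does not descend to $\T^d$, one must work with the lift $h(z_i)\in\R^d$ and only then project, and one must make sure the resulting map $\widetilde\Omega\to\widetilde\Omega$ is Borel measurable so that $(F_i)_*\widetilde\P=\widetilde\P$ is meaningful — this is why the hypothesis $A_n\subset U(0,\tfrac12)$ (and the corresponding $z_i\in U(0,\tfrac12)$, $r<\tfrac12$), together with Remark~\ref{rotation}(a), is used to keep everything inside a fundamental domain and avoid wrap-around ambiguities. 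Once this bookkeeping is set up, every remaining step is routine.
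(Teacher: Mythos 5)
Your proposal is correct and follows essentially the same route as the paper: decompose each $A_n$ into finitely many Borel pieces of diameter at most $2r$ (the paper partitions $U(0,\tfrac12)$ into small-diameter Borel sets rather than covering by balls and disjointifying, a cosmetic difference), use that a finite union commutes with $\limsup$ together with finite stability of $\dimh$, and define $\widehat A_n^i=A_n^i-z_i$ and $F_i(\x,\h)=\bigl(\x+(\Pi(h_n(z_i)))_{n\in\N},\h\bigr)$, whose measure preservation follows from translation invariance of $\L$ coordinatewise.
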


\begin{proof}
The desired decomposition may be obtained by any partitioning of
$U(0,\frac 12)$ into Borel subsets of diameter less than $r$ and for any choice
of points $z_i$ inside these sets since
\[
\limsup_{n\to\infty}(C_n\cup D_n)=\limsup_{n\to\infty}C_n\cup\limsup_{n\to\infty}D_n
\]
for all sets $C_n,D_n\subset\T^d$, $n\in\N$. For the second claim, define
$\widehat A_n^i=A_n^i-z_i\subset B(0,r)$ and
$F_i(\x,\h)=\bigl(\x+\bigl(\Pi(h_n(z_i))\bigr)_{n\in\N},\h\bigr)$ for all
$i\in\{1,\dots,M\}$. The translation invariance of $\L$ yields
$(F_i)_*\widetilde\P=\widetilde\P$. Further, since $z_i\in U(0,\frac 12)$ and
$A_n^i\subset U(0,\frac 12)$ for all $n\in\N$ and
$i\in\{1,\dots,M\}$,
\[
x_n+\Pi(h_n(A_n^i))=x_n+\Pi(h_n(z_i))+\Pi(h_n(\widehat A_n^i))
\]
for all $x_n\in\T^d$, implying the last claim.
\end{proof}

Now we are ready to prove the second main theorem of this section.

\begin{theorem}\label{hittingresult_dimh}
Let $F\subset\T^d$ be an analytic set with $\dimh F>d-s_0^R$. Assume
that $\max\{s_0^R,\dimh F\}>\frac 12(d+1)$. Then
\[
\dimh(E(\x,\h)\cap F)\ge s_0^R+\dimh F-d
\]
for $\widetilde\P$-almost all $(\x,\h)\in\widetilde\Omega$.
\end{theorem}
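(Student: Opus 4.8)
The plan is to use the random rotations to transfer the problem into a setting where a Mattila-type slicing theorem applies in its strong form, that is, with an almost sure lower bound rather than just an almost every direction statement. The additional dimension assumption $\max\{s_0^R,\dim_H F\}>\tfrac12(d+1)$ is exactly the hypothesis under which Mattila's intersection theorem (as in \cite[Theorem 13.11]{M95} together with the rotation-averaged version \cite[Theorem 13.12]{M95} and Kahane's/Mattila's results on intersections with rotated sets) guarantees a positive-dimension lower bound for almost all orthogonal transformations applied to one of the sets. The novelty over Theorem~\ref{upper} is that the rotations $h_n$ are now part of the probability space, so we can hope to invoke the rotation over the generating sets themselves, not just over an external parameter.

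First I would reduce to a localised statement. By Lemma~\ref{invariance}, for $\widetilde\P$-almost every $(\x,\h)$ the set $E(\x,\h)$ has full dimension $s_0^R$ inside every ball; combined with a countable-base argument it suffices to prove that for every fixed small ball $B$ meeting $F$ in a set of the same packing/Hausdorff dimension as $F$ (which, after passing to a subset of $F$ as in the proof of \eqref{packinglarge} via \cite{JP95}, we may assume), the intersection $E(\x,\h)\cap B\cap F$ is almost surely nonempty and of dimension at least $s_0^R+\dim_H F-d$. Next I would apply Lemma~\ref{reducesmall} to replace each generating set $A_n$ by a piece $\widehat A_n^i\subset B(0,r)$ sitting near the origin, at the cost of a measure-preserving bijection $F_i$ of $\widetilde\Omega$; since the union over $i\le M$ realises the full dimension $s_0^R$, it is enough to work with a single index $i$, i.e.\ with generating sets concentrated in a small ball about $0$. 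The point of this reduction is that on $B(0,r)$ with $r<\tfrac12$ the covering map $\Pi$ is injective and the rotation $h_n$ acts in the usual Euclidean way, so we may lift everything to $\R^d$.

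Then comes the main step. Conditioning on $\h$, the set $\widetilde E(\x,\h)$ lifted to $\R^d$ is a limsup of translated rotated copies; by Fubini and translation invariance (exactly as in the proof of Theorem~\ref{upper}) we reduce the hitting statement to: for $\theta^{\N}$-almost every rotation sequence $\h$ there is a fixed translate $z$ with $\dim_H\big((\widetilde E(\x,\h)+z)\cap\widetilde F\big)\ge s_0^R+\dim_H F-d$ on a positive-probability, hence (by a zero-one argument for the tail event in $\x$) full-probability, set of $\x$. To get the lower bound I would invoke the strong form of Mattila's slicing/intersection theorem \cite[Theorem 13.12]{M95}: under $\max\{\dim_H \widetilde E(\x,\h),\dim_H \widetilde F\}>\tfrac12(d+1)$, for $\theta$-almost every $g\in\mathcal O(d)$ and a positive-measure set of $z$ one has $\dim_H\big((g\widetilde E(\x,\h)+z)\cap \widetilde F\big)\ge \dim_H \widetilde E(\x,\h)+\dim_H\widetilde F-d$. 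Here is where I would use that the rotations are built into the covering set: applying a single extra rotation $g$ to $E(\x,\h)$ has the same law as $E(\x,\h)$ itself because $\theta$ is Haar (the sequence $(g h_n)$ has the same distribution as $(h_n)$, up to the translation bookkeeping from Lemma~\ref{reducesmall}), so the ``almost every $g$'' in Mattila's theorem can be absorbed into the randomness of $\h$. Combining this with $\dim_H E(\x,\h)=s_0^R$ (Remark~\ref{rotation}b and Lemma~\ref{invariance}) and the measurability from Lemma~\ref{measurability}, Fubini over $(g,z,\x,\h)$ produces a deterministic $z_0$ and a full-$\widetilde\P$ set of $(\x,\h)$ with the desired lower bound; translating back by the constant sequence $\z=(z_0)$ and using translation invariance of $\widetilde\P$ finishes the argument.

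The hard part will be the interchange of quantifiers in the previous paragraph: making precise that the ``$\theta$-a.e.\ rotation'' appearing in Mattila's intersection theorem genuinely matches the Haar-distributed rotations $h_n$ of the model, given that those rotations act on the individual generating sets $A_n$ rather than on the limsup set as a whole, and that the non-commutativity of $\mathcal O(d)$ with the translations $x_n$ forces the careful conjugation already packaged in Lemma~\ref{reducesmall}. Concretely, one must verify that for a Haar-random $g$ the law of $\limsup_n\big(x_n+\Pi(g h_n(\widehat A_n^i))\big)$ coincides with that of $E(\x,\h)$ localised near $0$, which is plausible by invariance of $\theta$ but needs the support condition $\widehat A_n^i\subset B(0,r)$, $r<\tfrac12$, to make $g$ act unambiguously on $\T^d$. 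A secondary technical point is that Mattila's theorem is stated for Borel sets while $\widetilde E(\x,\h)$ is only analytic; as in the proof of Theorem~\ref{upper}, one checks the relevant part of the proof goes through for analytic sets, or replaces $\widetilde E(\x,\h)$ by a Borel subset of equal dimension via the capacity-theoretic characterisation of Hausdorff dimension.
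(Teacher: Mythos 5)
Your proposal follows essentially the same route as the paper: localize via Lemma~\ref{invariance} and Lemma~\ref{reducesmall}, invoke Mattila's rotation--intersection theorem (your described content is actually \cite[Theorem 13.11]{M95}, not 13.12 which is the translation-only slicing theorem the paper uses for the upper bound), absorb the Haar-random rotation $g$ into the law of $(h_n)_{n\in\N}$, pick a concrete $(h_0,z_0)$ by Fubini and Lemma~\ref{measurability}, undo the translation, and finish with Kolmogorov's zero-one law on the tail event. You correctly identify the delicate point — making the single external rotation $g$ act unambiguously on $\T^d$ and commute correctly past the random translations, which is exactly the bookkeeping handled in the paper via the map $\hat h_0$ together with the support reduction $A_n\subset V$ — so the proposal matches the paper's argument in both structure and substance.
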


\begin{proof}
Since $E(\x,\h)\cap(F-z)=-z+E(\x+\z,\h)\cap F$ for all
$(\x,\h)\in\widetilde\Omega$ and $z\in\T^d$ (where $\z=(z,z,\ldots)$) and
$\widetilde\P$ is translation invariant, we may assume that
$\dimh(\Pi(V)\cap F)=\dimh F$, where $V=B(0,\frac 1{10})\subset\R^d$. Fix
$t<s_0^R+\dimh F-d$. By Lemma~\ref{invariance},
$\dimh(E(\x,\h)\cap\Pi(V))=s_0^R$ for $\widetilde\P$-almost all
$(\x,\h)\in\widetilde\Omega$. According to a general intersection result for
Hausdorff dimension \cite[Theorem 13.11]{M95}, if $A,B\subset\R^d$ are analytic
sets with $\dimh A+\dimh B>d$ and $\max\{\dimh A,\dimh B\}>\tfrac 12(d+1)$
then, for $\theta$-almost all $h\in\mathcal{O}(d)$,
\[
\L\bigl(\bigl\{z\in\R^d\mid\dimh (h(A)+z)\cap B)
  >\dimh A-\dimh B-d-\varepsilon\bigr\}\bigr)>0
\]
for any $\varepsilon>0$. Note that \cite[Theorem 13.11]{M95} is stated for
Borel sets but the proof given is valid for analytic sets as well since
Frostman's lemma is valid for analytic sets \cite{Ca}. In
\cite[Theorem 13.11]{M95}, the theorem
is stated in asymmetric way, but the above symmetric form is also valid by the
translation invariance of $\L$ and rotation invariance of $\theta$. Thus, for
every realisation of $E$ with $\dimh(E(\x,\h)\cap\Pi(V))=s_0^R$, we have, for
$\theta$-almost all $h\in\mathcal O(d)$, that
\begin{equation}\label{Pertti}
\L\bigl(\bigl\{z\in B(0,\tfrac 15)\mid\dimh\bigl((h(\widetilde E(\x,\h))+z)
  \cap\widetilde F\cap V\bigr)\ge t\bigr\}\bigr)>0.
\end{equation}
By Lemma~\ref{measurability} and
Fubini's theorem, for all $\varepsilon>0$, there exist
$h_0\in B(\Id,\varepsilon)\subset\mathcal O(d)$ and
$z_0\in B(0,\frac 15)\subset\R^d$ such that
\[
\widetilde\P\bigl(\bigl\{(\x,\h)\in\widetilde\Omega\mid\dimh\bigl(
 (h_0(\widetilde E(\x,\h))+z_0)\cap\widetilde F\cap V\bigr)\ge t\bigr\}\bigr)>0.
\]
Let $\w\in(\T^d)^\N$, where $w_n=\Pi(h_0^{-1}(z_0))$ for all $n\in\N$. Using the
fact that $z_0\in B(0,\frac 15)$, we may choose small enough $\varepsilon>0$
depending on $d$ only such that
\[
(h_0(\widetilde E(\x,\h))+z_0)\cap V=h_0\bigl(\widetilde E(\x,\h)+h_0^{-1}(z_0)
  \bigr)\cap V=h_0(\widetilde E(\x+\w,\h))\cap V.
\]
Thus, the translation invariance of $\P$ implies
\begin{equation}\label{Ppos}
\widetilde\P\bigl(\bigl\{(\x,\h)\in\widetilde\Omega\mid\dimh\bigl(
  h_0(\widetilde E(\x,\h))\cap\widetilde F\cap V\bigr)\ge t\bigr\}\bigr)>0.
\end{equation}

For $z\in\T^d$, let $\tilde z\in [-\frac 12,\frac 12[^d$ be the unique element
such that $\Pi(\tilde z)=z$. Define $\hat h_0\colon\T^d\to\T^d$ by
\[
\hat h_0(z)=\begin{cases}
        \Pi(h_0(\tilde z)),&\text{ if }\tilde z\in B(0,\frac 15)\\
        z,&\text{ if }\tilde z\in [-\frac 12,\frac 12[^d\setminus B(0,\frac 15).
      \end{cases}
\]
Then $(\hat h_0)_*\L=\L$ and $(H_0)_*\widetilde\P=\widetilde\P$, where
$H_0(\x,\h)=\bigl((\hat h_0(x_n))_{n\in\N},\h\bigr)$. By Lemma~\ref{reducesmall},
we may assume that $A_n\subset V$ for all $n\in\N$. Therefore, decreasing
$\varepsilon$ if necessary, we have
\begin{equation}\label{rotinvariance}
\Pi\bigl(h_0(\tilde x_n+h_n(A_n))\bigr)\cap\Pi(V)
  =\Bigl(\hat h_0(x_n)+\Pi\bigl(h_0(h_n(A_n))\bigr)\Bigr)\cap\Pi(V)
\end{equation}
for all $n\in\N$ and $(\x,\h)\in\widetilde\Omega$. Using
$(H_0)_*\widetilde\P=\widetilde\P$ and combining \eqref{Ppos} with
\eqref{rotinvariance}, we conclude
\[
\widetilde\P\bigl(\bigl\{(\x,\h)\in\widetilde\Omega\mid\dimh\bigl(\widetilde E
 (\x,(h_0h_n)_{n\in\N})\cap\widetilde F\cap V\bigr)\ge t\bigr\}\bigr)>0.
\]
Since $\theta$ is the Haar measure, $(h_0)_*\theta=\theta$ and, thus,
\[
\widetilde\P\bigl(\bigl\{(\x,\h)\in\widetilde\Omega\mid
  \dimh(E(\x,\h)\cap F)\ge t\bigr\}\bigr)>0.
\]
Since $\{(\x,\h)\in\widetilde\Omega\mid\dimh(E(\x,\h)\cap F)\ge t\}$ is a tail
event, $\dimh(E(\x,\h)\cap F)\ge t$ for $\widetilde\P$-almost all
$(\x,\h)\in\widetilde\Omega$ by Kolmogorov's zero-one law. The proof is
completed by letting $t$ tend to $s_0^R+\dimh F-d$ along a sequence.
\end{proof}

Combining Theorems~\ref{upper} and \ref{hittingresult_dimh} with
Remark~\ref{rotation}.c), gives the following corollary.

\begin{corollary}\label{cor}
Let $F\subset\T^d$ be an analytic set with $\dimh F>d-s_0^R$. Assume that
$\max\{s_0^R,\dimh F\}>\frac 12(d+1)$. Then, for $\widetilde\P$-almost all
$(\x,\h)\in\widetilde\Omega$,
\begin{equation}\label{eq11}
s_0^R+\dimh F-d\le\dimh(E(\x,\h)\cap F)\leq s_0^R+\dimp F-d.
\end{equation}
In particular, if $\dimh F=\dimp F$, then, for $\widetilde\P$-almost all
$(\x,\h)\in\widetilde\Omega$,
\[
\dimh(E(\x,\h)\cap F)=s_0^R+\dimh F-d.
\]
\end{corollary}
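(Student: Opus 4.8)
The plan is to obtain Corollary~\ref{cor} by simply combining the two bounds that have already been proved. The upper bound $\dimh(E(\x,\h)\cap F)\le s_0^R+\dimp F-d$ comes directly from Theorem~\ref{upper} applied to $E(\x,\h)$: as noted in Remark~\ref{rotation}.c), the proof of Theorem~\ref{upper} works verbatim for a fixed sequence of rotations $(h_n)_{n\in\N}$, and $s_0$ there should be read as $s_0^R$ (this is the almost sure Hausdorff dimension of $E(\x,\h)$, whose existence was guaranteed by the Kolmogorov zero-one law argument in Remark~\ref{rotation}.b)). Since we are assuming $\dimh F>d-s_0^R$ and $\dimp F\ge\dimh F>d-s_0^R$, we are in the regime $\dimp F\ge d-s_0^R$, so Theorem~\ref{upper} gives exactly the right-hand inequality in \eqref{eq11} for $\widetilde\P$-almost all $(\x,\h)\in\widetilde\Omega$.

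For the lower bound, the hypotheses of Corollary~\ref{cor} are precisely the hypotheses of Theorem~\ref{hittingresult_dimh}: $\dimh F>d-s_0^R$ and $\max\{s_0^R,\dimh F\}>\tfrac 12(d+1)$. Hence Theorem~\ref{hittingresult_dimh} yields $\dimh(E(\x,\h)\cap F)\ge s_0^R+\dimh F-d$ for $\widetilde\P$-almost all $(\x,\h)\in\widetilde\Omega$. Intersecting the two $\widetilde\P$-full measure events on which the lower and upper bounds hold, we obtain \eqref{eq11} on a $\widetilde\P$-full measure set, which proves the first assertion.

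For the ``in particular'' statement, one just substitutes $\dimp F=\dimh F$ into \eqref{eq11}: both the lower bound $s_0^R+\dimh F-d$ and the upper bound $s_0^R+\dimp F-d$ collapse to the common value $s_0^R+\dimh F-d$, forcing $\dimh(E(\x,\h)\cap F)=s_0^R+\dimh F-d$ almost surely. There is essentially no obstacle here — the work has all been done in Theorems~\ref{upper} and \ref{hittingresult_dimh}; the only point requiring a word of care is the identification of the dimension parameter in Theorem~\ref{upper} with $s_0^R$ rather than $s_0$, which is exactly what Remark~\ref{rotation}.c) (together with Remark~\ref{rotation}.b)) licenses. Accordingly the proof is a two-line deduction, and I would simply write: ``The upper bound follows from Theorem~\ref{upper} and Remark~\ref{rotation}.c), and the lower bound from Theorem~\ref{hittingresult_dimh}. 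The last claim is immediate from \eqref{eq11}.''
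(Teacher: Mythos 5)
Your proposal is correct and coincides with the paper's own derivation: the corollary is stated there as an immediate combination of Theorem~\ref{upper} (valid for the rotated model by Remark~\ref{rotation}.c), with $s_0$ read as $s_0^R$) and Theorem~\ref{hittingresult_dimh}. The hypothesis check and the ``in particular'' reduction are exactly as you describe.
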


\begin{remark}\label{conclution}
a) Observe that $\dimh F>d-s_0^R$ implies $\max\{s_0^R,\dimh F\}>\frac d2$. We
do not know whether the assumption $\max\{s_0^R,\dimh F\}>\frac 12(d+1)$ is
needed in Theorem~\ref{hittingresult_dimh}. This is a famous open problem in
the theory of intersections of general sets.  In \cite{LS}, Li and Suomala
constructed examples featuring that the lower and upper bounds in \eqref{eq11}
may be achieved. Thus one cannot find better bounds than those in
\eqref{eq11} involving only the Hausdorff and packing dimensions of $E(\x,\h)$
and $F$. However, in \cite{LS} there is an example where $\dimh(E(\x,\h)\cap F)$
is almost surely strictly between the bounds given in \eqref{eq11}.

b) The problem of exceptional geometry of a limsup set produced from
axes-parallel rectangles such as in Example \ref{bad case} seems to be a
prevalent phenomenon. As examples, see \cite[Section 6]{WWX15} and
\cite[Example 9.10]{F03}. Theorem \ref{hittingresult_dimh} offers further
evidence to support the folklore conjecture that this is caused by the atypical
exact alignment of the construction sets, and can be overcome by re-orienting
them which, in our case, was done by random rotations.

c) What Corollary~\ref{cor} implies in the framework of Bugeaud and Durand
\cite{BD} (recall Remark \ref{rem:BD}) is that, under the additional dimension
assumptions, the random Diophantine approximation properties for points in a
fixed analytic set $F$ are valid for any choice of shapes of the generating
sets, as long as the sets are randomly rotated. Notice that, for a large
selection of generating sets, the dimension does not vary with the rotations
\cite{FJJS}.
\end{remark}

\end{document}